\documentclass[reqno]{amsart}

\usepackage{amsmath}
\usepackage{amsfonts}
\usepackage{amsthm}
\usepackage{amssymb}
\usepackage{graphicx}
\usepackage{graphics}
\usepackage{bm}
\usepackage{dsfont}
\usepackage[all]{xy}
\usepackage{color}
\usepackage[font=footnotesize]{caption}
\numberwithin{equation}{section}
\newtheoremstyle{personal}%
{12pt}
{12pt}
{\slshape}
{}
{\bfseries}
{.}
{.5em}
{}
\theoremstyle{personal}%
\newtheorem{thm}{Theorem}[section]

\newtheorem{cor}[thm]{Corollary}
\newtheorem{lem}[thm]{Lemma}

\theoremstyle{definition}
\newtheorem{rem}[thm]{Remark}

\definecolor{gray}{gray}{0.4}

\newcommand{\N}{\mathds{N}}
\newcommand{\Z}{\mathds{Z}}
\newcommand{\R}{\mathds{R}}
\newcommand{\Q}{\mathds{Q}}
\newcommand{\C}{\mathds{C}}

\newcommand{\K}{\mathds{K}}

\newcommand{\diff}{\mathrm{d}}

\newcommand{\Tan}{\mathrm{T}}
\newcommand{\cu}{c_{\mathrm{u}}}
\newcommand{\cw}{c_{\mathrm{w}}}

\newcommand{\Aubry}{\mathcal{A}}
\newcommand{\AC}{\mathrm{AC}}
\newcommand{\SSS}{\mathcal{S}}

\newcommand{\W}{W^{1,2}}
\newcommand{\Wc}{W^{\mathrm{c}}}

\newcommand{\CC}{\mathcal{C}}

\newcommand{\UU}{\mathcal{U}}

\newcommand{\WW}{\mathcal{W}}
\newcommand{\MM}{\mathcal{M}}
\newcommand{\PP}{\mathcal{P}}

\newcommand{\crit}{\mathrm{crit}}
\newcommand{\ind}{\mathrm{ind}}
\newcommand{\nul}{\mathrm{nul}}

\begin{document}

\title{Waist theorems for Tonelli systems in higher dimensions}

\author[L. Asselle]{Luca Asselle}
\address{Luca Asselle\newline\indent
Justus Liebig Universit\"at Gie\ss en, Mathematisches Institut \newline\indent Arndtstra\ss e 2, 35392 Gie\ss en, Germany}
\email{luca.asselle@ruhr-uni-bochum.de}

\author[M. Mazzucchelli]{Marco Mazzucchelli}
\address{Marco Mazzucchelli\newline\indent 
CNRS, \'Ecole Normale Sup\'erieure de Lyon, UMPA\newline\indent  
46 all\'ee d'Italie, 69364 Lyon Cedex 07, France}
\email{marco.mazzucchelli@ens-lyon.fr}

\date{July 3, 2018. \emph{Revised}: September 17, 2019.}
\subjclass[2000]{37J45, 58E05}
\keywords{Tonelli Lagrangian, Ma\~n\'e critical values, Aubry set, periodic orbit, waist}

\begin{abstract}
We study the periodic orbits problem on energy levels of Tonelli Lagrangian systems over configuration spaces of arbitrary dimension. We show that, when the fundamental group is finite and the Lagrangian has no stationary orbit at the Ma\~n\'e critical energy level, there is a waist on every energy level just above the Ma\~n\'e critical value. With a suitable perturbation with a potential, we show that there are infinitely many periodic orbits on every energy level just above the Ma\~n\'e critical value, and on almost every energy level just below. Finally, we prove the Tonelli analogue of a closed geodesics result due to Ballmann-Thorbergsson-Ziller.
\end{abstract}

\maketitle

\section{Introduction}

Over the last few years there have been significant advances in the study of the multiplicity of periodic orbits with low energy of Tonelli Lagrangian systems over 2-dimensional closed configuration spaces \cite{Taimanov:1992sm, Contreras:2004lv, Abbondandolo:2015lt, Abbondandolo:2014rb, Asselle:2015ij, Asselle:2015sp, Abbondandolo:2016bh, Asselle:2018}. So far, there are essentially no analogous results for higher dimensional configuration spaces; at best, we know the existence of at least one periodic orbit for almost all low energy levels, thanks to the work of Contreras \cite{Contreras:2006yo}.

We recall that a Tonelli Lagrangian is a smooth function $L:\Tan M\to\R$ defined over the tangent bundle of a closed manifold $M$ (the configuration space) that is fiberwise superlinear with positive-definite fiberwise Hessian, see, e.g., \cite{Contreras:1999fm, Fathi:2008xl}. The phase space $\Tan M$ is laminated into the level sets of the energy function $E:\Tan M\to\R$, $E(q,v)=\partial_vL(q,v)v-L(q,v)$. The Lagrangian defines a flow $\phi_L^t:\Tan M\to\Tan M$ that preserves each compact energy level $E^{-1}(e)$. The flow lines have the form $\phi^t(\gamma(0),\dot\gamma(0))=(\gamma(t),\dot\gamma(t))$, where $\gamma:\R\to M$ is a solution of the Euler-Lagrange equation of $L$. A periodic curve $\gamma:\R/p\Z\to M$ lifts to a periodic orbit of the Euler-Lagrange flow on the energy level $E^{-1}(e)$ if and only if it is a critical point of the free-period action functional 
\begin{align*}
\SSS_e(\gamma)=\int_0^p L(\gamma(t),\dot\gamma(t))\,\diff t + pe.
\end{align*}
Here, $\SSS_e$ is defined on the space of periodic curves of any possible positive period. This space is formally given by $\MM:=\W(\R/\Z,M)\times(0,\infty)$, so that a pair $(\Gamma,p)\in\MM$ defines the $p$-periodic curve $\gamma(t)=\Gamma(t/p)$. Among the critical points of $\SSS_e$, the local minimizers, which are figuratively called \textbf{waists}, force important consequences on the Euler-Lagrange dynamics, at least when $M$ is a surface.

The variational properties of the free-period action functional $\SSS_e$ are similar to the ones of the geodesic energy from Finsler geometry when $e$ is large, whereas several difficulties arise when $e$ is low. What mark the boundary between the high and low energies are the so-called Ma\~n\'e critical value
\begin{align*}
c(L):=\min\big\{e\in\R\ \big|\ \SSS_e(\gamma)\geq 0\ \forall\gamma\in\MM\big\},
\end{align*}
and the analogous critical values $\cu(L)$ and $c_0(L)$ of the lifts of the Tonelli Lagrangian $L$ to the tangent bundle of the universal cover and of the universal abelian cover of $M$ respectively. Another relevant energy value is $e_0(L):=\max E(\cdot,0)$, above which every energy level covers the whole configuration space $M$. These energy values are ordered as $e_0(L)\leq \cu(L)\leq c_0(L) \leq c(L)$. The inequality  $e_0(L)\leq\cu(L)$ is strict for a suitably generic Tonelli Lagrangian $L$ (see \cite[Prop.~4.2]{Asselle:2018}), whereas the inequalities $\cu(L)\leq c_0(L) \leq c(L)$ become equalities in particular when $M$ has finite fundamental group. We refer the reader to \cite{Contreras:1999fm, Abbondandolo:2013is} for the background on the Ma\~n\'e critical values.

On every energy level $e>c_0(L)$, the Euler-Lagrange dynamics is conjugated to a Finsler geodesic flow on the unit tangent bundle of $M$. The problem of periodic orbits in this energy range is reduced to the famous closed geodesic problem for closed Finsler manifolds. Even more so, a waist with energy $e>c_0(L)$ of the Lagrangian system correspond to a waist of the associated Finsler metric, that is, a closed geodesic that locally minimizes the Finsler length among nearby periodic curves. If the manifold $M$ is simply connected, a Finsler metric on $M$ does not necessarily have waists. If $M$ has non-trivial (and possibly even finite) fundamental group, a Finsler metric on $M$ always has waists in non-trivial homotopy classes, but it does not necessarily have contractible ones; 
nevertheless, when $M$ is a 2-sphere, a recent results of the authors together with Benedetti \cite[Theorem~1.3]{Asselle:2018} guarantees that a contractible simple waist always exists on every energy level just above the Ma\~n\'e critical value $c(L)$. Our first theorem extends the validity of \cite[Theorem~1.3]{Asselle:2018}, except for the simplicity of the waists, to higher dimensional closed configuration spaces. This will later allow us to apply some techniques from 2-dimensional Tonelli dynamics to study the multiplicity of periodic orbits in generic Tonelli Lagrangian systems on arbitrary configuration spaces.

\begin{thm}\label{t:contractible_waist}
Let $M$ be a closed manifold with finite fundamental group, and $L:TM\rightarrow \R$ a Tonelli Lagrangian such that $e_0(L)<c(L)$. There exists $c_{\text w}(L)>c(L)$ such that, for every $e\in (c(L),\cw(L))$, $L$ possesses at least a contractible waist with energy $e$. 
\end{thm}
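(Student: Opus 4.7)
The strategy is to localize the free-period action $\SSS_e$ on a well-chosen subset of the contractible component $\MM_0\subset\MM$ on which it is bounded from below, satisfies the Palais--Smale condition, and attains its infimum at an interior point. The crucial features to exploit are, on the one hand, the gap $e_0(L)<c(L)$, which gives a uniform lower bound $L(q,v)+e\geq\delta>0$ on a neighborhood of the zero section for $e$ close to $c(L)$, and, on the other hand, the finiteness of $\pi_1(M)$, which makes $\cu(L)=c(L)$ and therefore places every short loop of negative action in the contractible component.

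First I would produce, for every $e\in(c(L),c(L)+\epsilon)$ with $\epsilon>0$ small, a contractible loop $\gamma_e$ with $\SSS_e(\gamma_e)<0$ whose period lies in a fixed compact interval $[p_-,p_+]\subset(0,\infty)$. The existence of such a loop comes from the Mañé potential on the universal cover: since $\cu(L)=c(L)<e$, for any $\tilde q$ there is a curve from $\tilde q$ to itself of negative $\SSS_e$-action, which projects to a contractible loop on $M$. An upper bound on the period follows because Mañé-minimizing curves between nearby points are short, and a lower bound on the period is forced by the uniform estimate $L(q,v)+e\geq\delta$ on small velocities, which makes very short loops have positive action.

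Next I would introduce the sublevel
\[
\UU_e:=\big\{\gamma\in\MM_0 \ \big|\ \SSS_e(\gamma)<0\big\}
\]
and let $\UU_e^\star$ denote the connected component of $\UU_e$ containing $\gamma_e$. The key a priori estimate to establish is a period bound on $\UU_e^\star$: every $\gamma\in\UU_e^\star$ has period bounded above by some $T(e)<\infty$. The heuristic reason is that any curve of very long period must spend a definite fraction of its parameter interval at small speed (pigeonhole on the total length, since $M$ is compact), and on those portions $L(q,v)+e\geq\delta$ forces a positive contribution to the action; summed over a long enough period, this contribution exceeds the finite amount of negative action available, contradicting $\gamma\in\UU_e$. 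Granted such a bound, standard Tonelli $W^{1,2}$ estimates yield both coercivity of $\SSS_e$ on $\overline{\UU_e^\star}$ and the Palais--Smale condition there, so $\inf_{\UU_e^\star}\SSS_e$ is attained at some $\gamma_e^\star$. Since $\SSS_e\equiv 0$ on the portion of $\partial\UU_e$ lying in the interior of $\MM_0$, while $\SSS_e(\gamma_e^\star)\leq\SSS_e(\gamma_e)<0$, the minimizer $\gamma_e^\star$ must be an interior point of $\UU_e$, hence an unconstrained local minimum of $\SSS_e$ on $\MM_0$, that is, a contractible waist with energy $e$. Setting $\cw(L):=c(L)+\epsilon$ yields the theorem.

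The main obstacle I foresee is the a priori period bound on $\UU_e^\star$. Iterates and concatenations of $\gamma_e$ make arbitrarily negative action easy to accumulate while the period grows unboundedly, so ruling out that $\UU_e^\star$ itself stretches out to arbitrarily large periods is genuinely non-trivial. In the 2-sphere case treated in \cite{Asselle:2018} this difficulty is circumvented by planar separation arguments unavailable in higher dimensions; here the control must come entirely from Tonelli data, and quantifying the gap $e-e_0(L)$ against the portion of a long curve that can stay at high enough speed without raising the action above zero seems to be the heart of the matter.
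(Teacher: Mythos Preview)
Your proposal rests on a sign error that makes the whole scheme collapse. By the very definition of the Ma\~n\'e critical value recalled in the paper,
\[
c(L)=\min\big\{e\in\R\ \big|\ \SSS_e(\gamma)\geq 0\ \forall\gamma\in\MM\big\},
\]
so for every $e>c(L)$ one has $\SSS_e(\gamma)\geq 0$ on all of $\MM$; since $\cu(L)=c(L)$, the same holds for the lifted functional on the universal cover. The negative-action contractible loop $\gamma_e$ you want does not exist, the set $\UU_e=\{\SSS_e<0\}\cap\MM_0$ is empty for $e\in(c(L),c(L)+\epsilon)$, and there is nothing to minimize on. The sentence ``since $\cu(L)=c(L)<e$, for any $\tilde q$ there is a curve from $\tilde q$ to itself of negative $\SSS_e$-action'' is exactly backwards: such curves exist for $e<\cu(L)$, not for $e>\cu(L)$. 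What you describe is the mechanism that produces waists in the \emph{subcritical} range, not the supercritical one addressed by the theorem.

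The paper's argument is different in kind. After passing to the (compact) universal cover, it fixes a small $\tau>0$ for which there are no $\tau$-periodic orbits at energy $c(L)$ (this uses $e_0(L)<c(L)$), and shows that $\delta:=\inf\SSS_{c(L)}|_{\W(\R/\Z,M)\times\{\tau\}}>0$. A point of the Aubry set then furnishes a periodic curve $\gamma$ of period $p>\tau$ with $\SSS_{c(L)}(\gamma)<\delta/2$, hence $\SSS_e(\gamma)<\inf\SSS_e|_{\W(\R/\Z,M)\times\{\tau\}}$ for $e$ slightly above $c(L)$. The waist is the minimizer of $\SSS_e$ over $\W(\R/\Z,M)\times(\tau,\infty)$: the Palais--Smale condition (valid because $e>c(L)=\cu(L)$) produces it, and the barrier at $p=\tau$ forces it into the open region $\{p>\tau\}$. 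No negative actions, and no a priori upper period bound, are needed.
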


On orientable surfaces, the existence of a contractible waist has strong consequences: it often forces the existence of infinitely many other contractible periodic orbits on the same energy level, or on arbitrarily close energy levels. In higher dimension this phenomenon still exists, but requires stronger assumptions on the waist, for instance the hyperbolicity. This latter property is not generic. However, a theorem of Carballo-Miranda \cite{Carballo:2013fc}, which extends a result of Klingenberg-Takens \cite{Klingenberg:1972dp} for geodesic flows, implies that, by perturbing a Tonelli Lagrangian with a $C^\infty$ generic potential, each periodic orbit in a given energy level is either hyperbolic or of twist type. In the twist case, the celebrated Birkhoff-Lewis Theorem \cite[Theorem~3.3.A.1]{Klingenberg:1978so} guarantees that the periodic orbit is an accumulation of periodic orbits on the same energy level. Arguing along this line, we obtain the following result, which we state as a corollary of Theorem~\ref{t:contractible_waist} since the infinitely many periodic orbits found are forced to exist by the waist provided by Theorem~\ref{t:contractible_waist}.

\begin{cor}\label{c:supercritical_mult}
Let $M$ be a closed manifold with finite fundamental group, and $L:TM\rightarrow \R$ a Tonelli Lagrangian. There exists a residual subset $\UU\subset C^\infty(M)$ such that, for all $U\in\UU$ satisfying $e_0(L-U)<c(L-U)$, the Lagrangian $L-U$ possesses infinitely many  periodic orbits on every energy level $e$ in an open dense subset of $(c(L-U),\cw(L-U))$.
\end{cor}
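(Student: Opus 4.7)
The plan is to combine Theorem \ref{t:contractible_waist} with the Carballo-Miranda theorem \cite{Carballo:2013fc}, the Birkhoff-Lewis theorem \cite[Thm.~3.3.A.1]{Klingenberg:1978so}, and a Smale-Birkhoff horseshoe argument in the hyperbolic case. The residual set $\UU$ will be built by intersecting residual sets for a countable dense family of energy levels. Specifically, I fix a countable dense subset $\{e_n\}_{n\in\N}\subset\R$ and, for each $n$, let $\UU_n\subset C^\infty(M)$ be the $C^\infty$-residual subset provided by \cite{Carballo:2013fc} such that every periodic orbit of $L-U$ lying on $E_{L-U}^{-1}(e_n)$ is either hyperbolic or of twist type. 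If necessary, I further intersect with a Kupka-Smale-type residual set enforcing transverse intersection of stable and unstable manifolds of hyperbolic orbits. Then $\UU:=\bigcap_n\UU_n$ is residual by Baire.

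Now pick $U\in\UU$ with $e_0(L-U)<c(L-U)$, and fix some $e_n\in(c(L-U),\cw(L-U))$. Theorem \ref{t:contractible_waist} supplies a contractible waist $\gamma_n$ of $L-U$ with energy $e_n$, which by the choice of $U$ is either hyperbolic or of twist type. In the twist case, Birkhoff-Lewis immediately yields infinitely many periodic orbits of $L-U$ on $E_{L-U}^{-1}(e_n)$ accumulating on $\gamma_n$. In the hyperbolic case, the dimension count $\dim W^s(\gamma_n)+\dim W^u(\gamma_n)=2\dim M$ inside the $(2\dim M-1)$-dimensional energy level, combined with the transversality condition, produces a transverse homoclinic trajectory to $\gamma_n$; the Smale-Birkhoff theorem then yields infinitely many periodic orbits of $L-U$ on $E_{L-U}^{-1}(e_n)$.

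To upgrade from a dense to an open dense subset of energies, I observe that hyperbolicity and twist type are both open conditions and that $\gamma_n$ persists, by the implicit function theorem, to a smooth family $\gamma_e$ of periodic orbits of $L-U$ at nearby energies $e$. Each $\gamma_e$ remains hyperbolic or twist, and the corresponding Birkhoff-Lewis or Smale-Birkhoff argument applied to $\gamma_e$ produces infinitely many periodic orbits on $E_{L-U}^{-1}(e)$ for all $e$ in an open neighborhood $I_n$ of $e_n$. The set $V:=\bigcup_n\bigl(I_n\cap(c(L-U),\cw(L-U))\bigr)$ is then open and, since $\{e_n\}$ is dense in $\R$, dense in $(c(L-U),\cw(L-U))$.

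The hardest step will be the hyperbolic case: hyperbolicity alone does not force a transverse homoclinic connection, so one either has to check that potential perturbations are rich enough to realize a Kupka-Smale-type transversality on top of the Carballo-Miranda dichotomy, or one has to exploit the fact that the waist is a local minimizer to construct homoclinic trajectories directly via a Hedlund-Morse-Bangert-type argument. A second, more technical point to verify is that for each generic $U\in\UU$ the orbit $\gamma_n$ produced by Theorem \ref{t:contractible_waist} is non-degenerate as a critical point of $\SSS_{e_n}$, so that it actually falls into the hyperbolic/twist dichotomy supplied by \cite{Carballo:2013fc}; this can be arranged by including a non-degeneracy genericity in the definition of $\UU_n$.
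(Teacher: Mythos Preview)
Your framework matches the paper up to a point: the residual set is built exactly as you describe, by applying Carballo--Miranda at a countable dense set of energies and intersecting; the twist case is handled by Birkhoff--Lewis; and the extension from a dense to an open dense set of energies is done by passing to the orbit cylinder of a non-degenerate periodic orbit. Your worry about non-degeneracy of the waist is unnecessary: Carballo--Miranda applies to \emph{every} periodic orbit on the level $e_n$, so the waist produced by Theorem~\ref{t:contractible_waist} is automatically either hyperbolic or of twist type.

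The genuine gap is in your treatment of the hyperbolic case. The dimension count $\dim W^s+\dim W^u=2\dim M$ only tells you that a transverse intersection, \emph{if one exists away from the orbit}, is one-dimensional; it does not produce a homoclinic point. A Kupka--Smale condition makes intersections transverse but does not force them to exist. You gesture at a Hedlund--Morse--Bangert construction of homoclinics from the waist property, which can indeed be made to work, but you would then still need to perturb to transversality while keeping the Carballo--Miranda dichotomy---a substantial program that you leave open.

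The paper avoids all of this by treating the hyperbolic case variationally rather than dynamically. Since $\gamma$ is a hyperbolic waist and $e>c(L')$, Bott's iteration formulas give $\ind(\gamma^k)=\nul(\gamma^k)=0$ for all $k$, so every iterate $\gamma^k$ is again a hyperbolic waist, and $\SSS_e'(\gamma^k)=k\,\SSS_e'(\gamma)\to+\infty$. One then runs a mountain-pass between $\gamma$ and $\gamma^k$ in the contractible component, obtaining critical values $s(k)\to\infty$. If only finitely many prime contractible orbits existed, the mountain-pass critical circles at level $s(k)$ would, for large $k$, all be high iterates of a fixed finite list; this is ruled out by the result of Abbondandolo--Macarini--Mazzucchelli--Paternain \cite[Theorem~2.6]{Abbondandolo:2014rb} that sufficiently high iterates are never mountain passes. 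This argument needs neither homoclinic orbits nor any transversality beyond what Carballo--Miranda already provides.
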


We wish to stress that, according to the already mentioned \cite[Prop.~4.2]{Asselle:2018}, the space of Tonelli Lagrangians $L:TM\rightarrow \R$ satisfying the strict inequality $e_0(L)<c(L)$, or even the stronger one $e_0(L)<\cu(L)$, is $C^0$-open and $C^1$-dense in the whole space of Tonelli Lagrangians. In particular, if $L$ satisfies $e_0(L)<c(L)$, the same inequality holds for $L-U$ provided the potential $U$ is $C^0$-small.

An analogous statement also allows us to deal with almost all energy levels just below the Ma\~n\'e critical value and with all energy levels just above. 

\begin{thm}\label{t:just_below}
Let $M$ be a closed manifold with finite fundamental group, and $L:TM\rightarrow \R$ a Tonelli Lagrangian such that $e_0(L)<c(L)$. There exist an arbitrarily $C^1$-small $U\in C^\infty(M)$ and $\epsilon>0$ such that $c(L)=c(L-U)$, and $L-U$ possesses infinitely many periodic orbits on almost every energy level $e\in(c(L)-\epsilon,c(L))$ and on every energy level $e\in(c(L),c(L)+\epsilon)$.
\end{thm}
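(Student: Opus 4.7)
The plan is to combine Theorem~\ref{t:contractible_waist} with a mountain-pass scheme that persists across $c(L)$, apply Struwe's monotonicity trick to yield critical points at almost every energy level below $c(L)$, and then enforce genericity for the Birkhoff-Lewis theorem on the resulting orbits via a careful potential perturbation, as in Corollary~\ref{c:supercritical_mult}.

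First, I would fix $\cw(L)$ from Theorem~\ref{t:contractible_waist} and, for every $e\in(c(L),\cw(L))$, consider the contractible waist $\gamma_e$ of energy $e$. Since $\gamma_e$ is a strict local minimum of $\SSS_e$ on the contractible component $\MM_0\subset\MM$, and $\SSS_e$ takes values arbitrarily close to $0<\SSS_e(\gamma_e)$ on very short constant loops (here one uses $e>e_0(L)$), one obtains a mountain-pass critical value
\[
c_{\mathrm{mp}}(e) := \inf_{\Pi}\max_{(\Gamma,p)\in\Pi}\SSS_e(\Gamma,p) > \SSS_e(\gamma_e),
\]
with $\Pi$ ranging over continuous paths in $\MM_0$ from $\gamma_e$ to a constant loop of small period. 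For $e\in(c(L),\cw(L))$ the Euler-Lagrange flow of $L$ is conjugate to a Finsler geodesic flow, since $c(L)=c_0(L)=\cu(L)$ under the finite fundamental group assumption; in particular $\SSS_e$ satisfies a suitable Palais-Smale condition on $\MM_0$ (modulo period collapse) and the minimax theorem produces a second contractible periodic orbit $\alpha_e$ at energy $e$. As the mountain-pass geometry varies continuously with $e$, the scheme extends so that $c_{\mathrm{mp}}(e)$ is well defined and non-decreasing (thanks to $\partial_e\SSS_e=p>0$) on an open interval $(c(L)-\epsilon_0,c(L)+\epsilon_0)$.

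Second, I would invoke Struwe's monotonicity trick: as a monotone function, $c_{\mathrm{mp}}$ is differentiable at almost every $e$ in that interval, and at points of differentiability one constructs a bounded Palais-Smale sequence with periods bounded away from $0$ (using $e_0(L)<c(L)$) and from $\infty$, producing a critical point $\alpha_e$, i.e., a periodic orbit at energy $e$. Thus at least one contractible periodic orbit exists for every $e\in(c(L),c(L)+\epsilon_0)$ and for almost every $e\in(c(L)-\epsilon_0,c(L))$. I would then follow the Birkhoff-Lewis perturbation strategy of Corollary~\ref{c:supercritical_mult}: by Carballo-Miranda, pick an arbitrarily $C^1$-small $U\in C^\infty(M)$ with $c(L-U)=c(L)$ (arranged by taking $U$ to vanish on the projection of the Aubry set of $L$) so that each periodic orbit of $L-U$ in the relevant energy range is hyperbolic or of twist type; the Birkhoff-Lewis theorem in the twist case, and transverse homoclinics together with Smale horseshoes in the hyperbolic case, then yield infinitely many periodic orbits on each such level.

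The main obstacle I foresee is making the Carballo-Miranda genericity uniform in $e$: applied levelwise it only produces a dense $G_\delta$ of energy values with the multiplicity property, which is precisely the content of Corollary~\ref{c:supercritical_mult}. To upgrade to every $e\in(c(L),c(L)+\epsilon)$ one must exploit the openness of hyperbolicity and twist-type together with the continuous $e$-dependence of $\gamma_e$, so that a single small perturbation $U$ serves for a whole interval of energies near $c(L)$. A subtler analytic point is to verify Palais-Smale compactness for the minimax scheme below $c(L)$, where $\SSS_e$ is unbounded below and Struwe's trick can only deliver the ``almost every'' rather than ``every'' conclusion.
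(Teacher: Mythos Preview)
Your proposal has a genuine structural gap: you never produce a waist that persists \emph{through} $c(L)$. Theorem~\ref{t:contractible_waist} only yields waists for $e>c(L)$, and there is no mechanism by which this waist, or the mountain-pass geometry built on it, extends to $e<c(L)$; your sentence ``the scheme extends \ldots\ on an open interval $(c(L)-\epsilon_0,c(L)+\epsilon_0)$'' is the missing step, not a minor continuity argument. The paper resolves this by a completely different perturbation: rather than Carballo--Miranda, it invokes the Figalli--Rifford closing lemma (together with a Contreras--Iturriaga argument) to find a $C^1$-small $U$ with $c(L)=c(L-U)$ such that the Aubry set of $L-U$ is a single \emph{hyperbolic} periodic orbit $\gamma$. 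Since $\SSS'_{c(L)}(\gamma)=0$, this $\gamma$ is a hyperbolic waist at the critical level, and Lemma~\ref{l:cylinder_hyperbolic_waist} then gives an orbit cylinder of hyperbolic waists $\gamma_e$ defined on a full two-sided interval $(c(L)-\epsilon,c(L)+\epsilon)$.

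Your second gap is in the passage from one orbit to infinitely many. In the hyperbolic case you appeal to ``transverse homoclinics together with Smale horseshoes'', but a hyperbolic periodic orbit need not have any homoclinic intersections, let alone transverse ones. The paper avoids this entirely: since $\gamma_e$ is hyperbolic, all iterates $\gamma_e^m$ are waists with $\SSS_e'(\gamma_e^m)\to\pm\infty$, so one runs a \emph{family} of minmax values $s_e(m)$ (between $\gamma_e^m$ and a lower-action curve for $e<c(L)$, or between iterates for $e>c(L)$). Bangert's ``pulling one loop at a time'' argument bounds $s_e(m)-\SSS_e'(\gamma_e^m)$, forcing $s_e(m)\to-\infty$, and then \cite[Theorem~2.6]{Abbondandolo:2014rb} (highly iterated orbits are not mountain passes) rules out that finitely many geometric orbits account for all the $s_e(m)$. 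Struwe's monotonicity trick enters exactly where you put it, to handle the lack of Palais--Smale below $c(L)$; above $c(L)$ Palais--Smale holds and one gets every energy level directly. In particular, no genericity of Carballo--Miranda type is needed at all, which is why the conclusion is ``every'' rather than ``open dense'' above $c(L)$.
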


The proof of Theorem~\ref{t:just_below} combines the 2-dimensional techniques in \cite{Abbondandolo:2014rb} with a closing lemma in Aubry-Mather theory established by Figalli-Rifford \cite{Figalli:2015ft}. More specifically, the perturbation of the Lagrangian provides a hyperbolic periodic orbit that is the whole Aubry set, and in particular is a waist.

In his last paper \cite{Mane:1996qe} Ma\~n\'e conjectured that the Aubry set of a Tonelli Lagrangian perturbed with a $C^r$ generic potential, for some $r\geq 2$, consists of a (possibly stationary) hyperbolic periodic orbit. If the Ma\~n\'e conjecture were true, Theorem~\ref{t:just_below} could be strengthen by allowing to perturb $L$ with a $C^\infty$ generic potential $U$ such that $e_0(L-U)<c(L-U)$; in particular, the obtained statement would improve Corollary~\ref{c:supercritical_mult}. Recently, Ma\~n\'e's conjecture was proved by Contreras \cite{Contreras:2014ul} in the case where $r=2$ and $M$ is a closed surface. However, when $M$ is a closed surface, much stronger statements than Theorem~\ref{t:just_below} hold: there are infinitely many periodic orbits on almost all energy levels in $(e_0(L),\cu(L))$, and when $M$ is a sphere and $(e_0(L),c(L))\neq\varnothing$, there are infinitely many periodic orbits on every energy level just above $c(L)$, see \cite{Abbondandolo:2014rb, Asselle:2016qv, Asselle:2018}.

Our last perturbative result is the Tonelli generalization of a closed geodesics theorem due to Ballmann-Thorbergsson-Ziller \cite{Ballmann:1981qi}. Our extension encompasses the Finsler case, but also includes the possibly non-empty energy range between the Ma\~n\'e critical value of the universal cover and of the universal abelian cover, where the dynamics is not conjugate to  a Finsler geodesic flow. We denote by $[\alpha]$ the conjugacy class of an element $\alpha\in\pi_1(M)$. We recall that the conjugacy classes of the fundamental group $\pi_1(M)$ are in one-to-one correspondence with the connected components of the free loop space of $M$, see, e.g., \cite[Prop.~3.2.2]{Mazzucchelli:2012nm}.

\begin{thm}\label{t:btz}
Let $M$ be a closed manifold having a non-trivial $\alpha\in \pi_1(M)$ satisfying $[\alpha^n]=[\alpha ^m]$ for some distinct non-negative integers $n,m$. For each Tonelli Lagrangian $L:\Tan M\to\R$ there exists a residual subset $\UU\subset C^\infty(M)$ such that, for all $U\in\UU$, the Lagrangian $L-U$ possesses infinitely many periodic orbits on every energy level $e$ in an open dense subset of $(\cu(L-U),\infty)$. Moreover, for each infinite subset $\K\subseteq\N$, such periodic orbits can be found within the homotopy classes $\big\{[\alpha^{kn}]\ \big|\ k\in\K\big\}$.
\end{thm}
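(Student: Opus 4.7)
The plan is to combine the Carballo-Miranda generic bumpiness theorem (already used in Corollary \ref{c:supercritical_mult}) with a Morse-theoretic argument in the spirit of Ballmann-Thorbergsson-Ziller \cite{Ballmann:1981qi} applied to the free-period action functional $\SSS_e$. Fix the residual set $\UU\subset C^\infty(M)$ such that for every $U\in\UU$ each closed orbit of $L-U$ on each energy level is either hyperbolic or elliptic of twist type. For $e\in(\cu(L-U),\infty)$ I will show that $L-U$ possesses infinitely many primitive closed orbits on $E^{-1}(e)$ living in classes from $\{[\alpha^{kn}]:k\in\K\}$, for any prescribed infinite $\K\subseteq\N$.

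Since $e>\cu(L-U)$, for every non-trivial free-homotopy class $[\beta]$ of loops in $M$ the functional $\SSS_e$ attains its infimum $\ell_e([\beta])$ on the corresponding component of $\MM$, and the minimizer is a closed orbit on $E^{-1}(e)$. From $\alpha^n=g\alpha^m g^{-1}$ I deduce $[\alpha^{kn}]=[\alpha^{km}]$ for every $k\geq 1$, hence $\ell_e(\alpha^{kn})=\ell_e(\alpha^{km})$. Combined with the sub-additive inequality $\ell_e(\alpha^{a+b})\leq\ell_e(\alpha^a)+\ell_e(\alpha^b)+C_e$, obtained by concatenating minimizing loops through short paths whose action is controlled by the Ma\~n\'e potential at level $e$ (well-defined for $e>\cu(L-U)$), Fekete's lemma yields a stable translation action $\mu_e:=\lim_k\ell_e(\alpha^k)/k$ satisfying $n\mu_e=m\mu_e$, whence $\mu_e=0$ and $\ell_e(\alpha^{kn})=o(k)$ as $k\to\infty$.

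I then argue by contradiction, supposing only finitely many primitive orbits $c_1,\ldots,c_N$ exist on $E^{-1}(e)$. The case where one of the $c_i$ is of twist type is handled directly by Birkhoff-Lewis exactly as in Corollary \ref{c:supercritical_mult}, so I assume all $c_i$ are hyperbolic, with actions $A_i=\SSS_e(c_i)>0$. Each minimizer $\gamma_k$ in class $[\alpha^{kn}]$ is then an iterate $c_{i_k}^{j_k}$ with $j_k A_{i_k}=\ell_e(\alpha^{kn})=o(k)$, so $j_k=o(k)$. If the set $\{[\alpha^{kn}]:k\in\K\}$ is infinite, an elementary counting argument produces a contradiction: there are at most $N\cdot o(K)=o(K)$ distinct conjugacy classes among $\{[c_i^j]:i\leq N,\,j\leq o(K)\}$, insufficient to realize the distinct classes in $\{[\alpha^{kn}]:k\in\K\cap[1,K]\}$ for $K$ large. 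If instead $\{[\alpha^{kn}]:k\in\K\}$ is contained in a finite set of conjugacy classes---which occurs for instance when $\alpha$ has finite order---one invokes the genuine Ballmann-Thorbergsson-Ziller argument on the $S^1$-equivariant rational homology of the relevant components of $\MM$: the topological hypothesis on $\pi_1(M)$ forces such a component to have unbounded equivariant Betti numbers, contradicting the Hingston-Rademacher bound on the local $S^1$-equivariant homology at iterates of hyperbolic orbits when only finitely many primitives contribute. This latter case, requiring the Bangert-type equivariant Morse-theoretic machinery, is where the main obstacle lies.

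Openness and density of the resulting set of good energies in $(\cu(L-U),\infty)$ follow from upper and lower semicontinuity of the number of isolated closed orbits below a fixed action threshold, and from the openness of the subset of energies admitting a twist orbit, exactly as in the argument underlying Corollary \ref{c:supercritical_mult}; the restriction to the homotopy classes $\{[\alpha^{kn}]:k\in\K\}$ is automatic, since the orbits detected by either branch of the argument are precisely (iterates of) the minimizers or equivariant critical points living in these classes.
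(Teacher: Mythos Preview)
Your approach differs substantially from the paper's and has several genuine gaps. First, Carballo--Miranda (Theorem~\ref{t:Carballo_Miranda}) gives, for each \emph{fixed} energy $e$, a residual $\UU_e$; an uncountable intersection need not be residual, so you cannot assume the hyperbolic/twist dichotomy on every level. The paper intersects only over $e\in\Q$ and then uses orbit cylinders (Remark~\ref{r:orbit_cylinder}, Lemma~\ref{l:cylinder_hyperbolic_waist}) to propagate the conclusion to a neighborhood of each rational $\tilde e$. Second, the positivity $A_i=\SSS_e(c_i)>0$ is unjustified in the range $\cu(L-U)<e\le c(L-U)$, where periodic orbits of non-positive free-period action can occur. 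Third, and most seriously, your counting argument does not close when $\K$ is sparse: from $j_k=o(k)$ you obtain at most $o(K)$ admissible pairs $(i,j)$ for $k\le K$, but the number of distinct classes among $\{[\alpha^{kn}]:k\in\K\cap[1,K]\}$ is at most $|\K\cap[1,K]|$, which for $\K=\{2^r:r\ge1\}$ is only $O(\log K)$; the bound $\log K\le o(K)$ yields no contradiction. Fourth, the finite-classes case is essentially left open: the assertion that the relevant loop-space component has unbounded $S^1$-equivariant rational Betti numbers does not follow from the hypothesis $[\alpha^n]=[\alpha^m]$ (think of lens spaces), and neither \cite{Ballmann:1981qi} nor the paper proceeds via Betti-number growth.

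The paper avoids all of this by a single minmax scheme that treats both cases at once. One takes the global minimizer $\gamma$ of $\SSS_e'$ on the component $[\alpha]$, which at a rational energy is a hyperbolic waist and hence extends to an orbit cylinder of hyperbolic waists. For every $k\in\K$ the iterates $\gamma^{nk}$ and $\gamma^{mk}$ are \emph{distinct} hyperbolic waists in the \emph{same} component $\CC_{nk}=\CC_{mk}$, so a mountain-pass value $s(k)$ between them is critical. A short argument---using only that $\SSS_{e'}'$ is bounded from below on each component for $e'\in(\cu(L'),e)$, not positivity of the action---shows that any fixed periodic orbit can appear in $\crit(\SSS_e')\cap(\SSS_e')^{-1}(s(k))\cap\CC_{nk}$ for at most finitely many $k\in\K$. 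One then concludes via \cite[Theorem~2.6]{Abbondandolo:2014rb}, which says high iterates are never mountain passes. No growth rate on $\K$, no positivity of action, and no topological input beyond the equality $\CC_{nk}=\CC_{mk}$ is needed.
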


Notice that, if $\alpha\in\pi_1(M)$ is a non-trivial element of finite order $n-1$, and thus $\alpha^{n}=\alpha$, we can choose $\K:=\big\{n^h\ \big|\ h\in\N \big\}$.
Since $\alpha^{kn}=\alpha$ for all $k\in\K$, Theorem~\ref{t:btz} implies that $L-U$ possesses infinitely many periodic orbits in the homotopy class $[\alpha]$ on every energy level $e$ in an open dense subset of $(\cu(L-U),\infty)$.

The integers $n,m$ were not required to be non-negative in the original statement for Riemannian geodesic flows in \cite{Ballmann:1981qi}. In our proof, such an assumption is needed due to the possible non-reversibility of the Tonelli Lagrangian $L$. We discuss this further in Remark~\ref{r:reversibility}.

\subsection*{Organization of the paper}
In Section~\ref{s:waists} we prove Theorem~\ref{t:contractible_waist}. In Section~\ref{s:hyp_twist} we give the needed background on generic Hamiltonian dynamics, and in particular recall some properties of hyperbolic periodic orbits and of periodic orbits of twist type. In Section~\ref{s:mult}, we prove the remaining statements.

\subsection*{Acknowledgments}
Marco Mazzucchelli is grateful to Alessio Figalli and Ludovic Rifford for pointing out the argument in \cite[page~935]{Contreras:1999wj} in order to obtain the hyperbolicity of the periodic orbit in their result \cite[Theorem~1.2]{Figalli:2015ft}. Both authors are grateful to the anonymous referee for her/his careful reading of the paper, and for spotting a few inaccuracies in the first draft.
Luca Asselle is partially supported by the DFG-grants AB 360/2-1 ``Periodic orbits of conservative systems below the Ma\~n\'e critical energy value'' and AS 546/1-1 ``Morse theoretical methods in Hamiltonian dynamics''.

\section{Supercritical waists}
\label{s:waists}

We begin by recalling some results on Aubry-Mather theory from Ma\~n\'e's perspective. We refer the reader to \cite{Mane:1997nw, Contreras:1997jq} for the proofs. Let $M$ be a closed manifold, and $L:\Tan M\to\R$ a Tonelli Lagrangian with associated energy function $E:\Tan M\to\R$, $E(q,v)=\partial_vL(q,v)v-L(q,v)$. Given two points $q_0,q_1\in M$, we denote by $\AC(q_0,q_1)$ the space of all absolutely continuous curves $\gamma:[0,\tau]\to M$ defined on any compact interval of the form $[0,\tau]$ and such that $\gamma(0)=q_0$ and $\gamma(\tau)=q_1$. We stress that the parameter $\tau\geq0$ is not fixed, and thus different curves in $\AC(q_0,q_1)$ are defined on possibly different intervals. The action at energy $e\in\R$ of an absolutely continuous curve $\gamma:[0,\tau]\to M$ is the quantity
\begin{align*}
\SSS_e(\gamma)=\int_0^\tau L(\gamma(t),\dot\gamma(t))\,\diff t + \tau e\in\R\cup\{+\infty\}.
\end{align*}
The action potential at energy $e$ is the function
\begin{align*}
\Phi_e:M\times M\to\R\cup\{-\infty\},
\qquad
\Phi_e(q_0,q_1)=\inf \big\{\SSS_e(\gamma)\ \big|\ \gamma\in\AC(q_0,q_1) \big\}.
\end{align*}
The Ma\~n\'e critical energy value $c(L)$ is precisely the minimum $e\in\R$ such that $\Phi_e$ is uniformly bounded from below. In particular 
\[
\Phi_{c(L)}(q_0,q_1)+\Phi_{c(L)}(q_1,q_0)\geq \Phi_{c(L)}(q_0,q_0)=0,
\qquad
\forall q_0,q_1\in M,
\] 
and therefore $\SSS_{c(L)}(\gamma)\geq-\Phi_{c(L)}(q_1,q_0)$ for all $\gamma\in\AC(q_0,q_1)$. The Aubry set $\Aubry(L)$ is defined as the set of those $(q,v)\in\Tan M$ with the following property: if $\gamma:\R\to M$ is a solution of the Euler-Lagrange equation of $L$ with $\gamma(0)=q$ and $\dot\gamma(0)=v$, then for each $\tau\geq0$ we have $\SSS_{c(L)}(\gamma|_{[0,\tau]})=-\Phi_{c(L)}(\gamma(\tau),\gamma(0))$. It turns out that the Aubry set $\Aubry(L)$ is always non-empty and contained in the energy level $E^{-1}(c(L))$.

Now, consider the free-period action functional at energy $e$, which is simply the restriction of the action $\SSS_e$ over the space of $\W$ periodic curves of any positive period. Formally, such a space is given by the product \[\MM:=\W(\R/\Z,M)\times(0,\infty),\] and a pair $(\Gamma,p)\in\MM$ will be identified with the $p$-periodic curve $\gamma:\R/p\Z\to M$, $\gamma(t)=\Gamma(t/p)$. As customary in the literature, we will write $\gamma=(\Gamma,p)$. The free-period action functional is then
$\SSS_e:\MM\to\R\cup\{\infty\},\  \SSS_e(\Gamma,p)=\SSS_e(\gamma)$.
Its critical points are precisely those $\gamma=(\Gamma,p)$ that are periodic solutions of the Euler-Lagrange equation of $L$ with energy $E(\gamma,\dot\gamma)\equiv e$. Throughout the paper, we can always assume without loss of generality that our Tonelli Lagrangian $L$ is fiberwise quadratic outside a large compact subset of $\Tan M$ containing the energy levels that we consider (see \cite[Appendix~A]{Asselle:2018}). In this way, the free-period action functional  is everywhere finite, $C^{1,1}$, and twice Gateaux differentiable. For each $[p_1,p_2]\subset(0,\infty)$, its restriction $\SSS_e|_{\W(\R/\Z,M)\times[p_1,p_2]}$ satisfies the Palais-Smale condition with a suitable choice of complete Riemannian metric on $\MM$. If $e>\cu(L)$, also the unrestricted functional $\SSS_e$ satisfies the Palais-Smale condition. We refer the reader to \cite{Abbondandolo:2013is} for the proof of these properties. 

Theorem~\ref{t:contractible_waist} will be a consequence of the following lemma.

\begin{lem}
\label{l:waist}
If the Aubry set $\Aubry(L)$ does not contain $\tau$-periodic orbits for some $\tau>0$, then there exists $\cw(L,\tau)>c(L)$ such that, for every $e\in (c(L),\cw(L,\tau))$, $L$ possesses at least a  waist with energy $e$ and period larger than~$\tau$.
\end{lem}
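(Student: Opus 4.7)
The plan is to produce, for each $e$ slightly above $c(L)$, a local minimizer of the free-period action $\SSS_e$ of period strictly larger than $\tau$. The starting observation is that the Aubry hypothesis translates into the quantitative bound
\begin{align*}
m_\tau:=\inf\big\{\SSS_{c(L)}(\gamma)\ \big|\ \gamma\in\MM,\ p(\gamma)=\tau\big\}>0.
\end{align*}
Indeed, if $m_\tau$ vanished, the Palais-Smale condition for $\SSS_{c(L)}|_{\W(\R/\Z,M)\times\{\tau\}}$ together with the lower semi-continuity of the action would produce a smooth $\tau$-periodic minimizer $\gamma_\infty$ with $\SSS_{c(L)}(\gamma_\infty)=0$. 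Combining $\SSS_{c(L)}(\gamma_\infty|_{[s,t]})\geq\Phi_{c(L)}(\gamma_\infty(s),\gamma_\infty(t))$ with the triangle-like inequality $\Phi_{c(L)}(a,b)+\Phi_{c(L)}(b,a)\geq 0$ and the vanishing $\SSS_{c(L)}(\gamma_\infty)=0$ forces equality throughout, so $\gamma_\infty$ is semi-static and belongs to $\Aubry(L)$, contradicting the hypothesis.

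A lower semi-continuity argument for $p\mapsto m_p:=\inf\{\SSS_{c(L)}(\gamma):p(\gamma)=p\}$ propagates this positivity to a neighborhood: there exist $\delta>0$ and $m'\in(0,m_\tau]$ with $m_p\geq m'$ for every $p\in[\tau,\tau+\delta]$. Consequently, for $e\geq c(L)$, every $\gamma\in\MM$ with $p(\gamma)\in[\tau,\tau+\delta]$ satisfies $\SSS_e(\gamma)\geq m'$, providing an action barrier around the slab of periods near $\tau$.

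The second key ingredient is a \emph{test curve} $\gamma_*\in\MM$ with $p(\gamma_*)>\tau+\delta$ and $\SSS_e(\gamma_*)<m'$ for $e$ close enough to $c(L)$. The hypothesis forbids stationary orbits in $\Aubry(L)$ (any such orbit would be $\tau$-periodic), and since $\Aubry(L)$ is a non-empty compact invariant set of the Euler-Lagrange flow, Poincar\'e recurrence produces a non-stationary recurrent point in $\Aubry(L)$. Closing a long segment of the corresponding semi-static trajectory by a near action-minimizing arc of $\Phi_{c(L)}$ gives an absolutely continuous closed curve in $\W$ of arbitrarily small action and period $P$ as large as desired; minimizing in $\W$ at fixed period $P$ and invoking the semi-static argument of the first paragraph upgrades this to an actual $P$-periodic orbit in $\Aubry(L)$, necessarily with $P\neq\tau$. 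Choosing $P>\tau+\delta$ and $e-c(L)<m'/P$, the vanishing of $\SSS_{c(L)}$ on this Aubry orbit gives $\SSS_e=P(e-c(L))<m'$, so the orbit serves as $\gamma_*$.

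Finally, the barrier from the second paragraph disconnects the open sub-level set $\{\SSS_e<m'\}$; the test curve $\gamma_*$ lies in a connected component contained in $\{p>\tau+\delta\}$. Minimizing $\SSS_e$ on this component, minimizing sequences have $p$ bounded from above (by $\SSS_e\geq p(e-c(L))$) and are bounded in $\W$, so the Palais-Smale condition recalled in the excerpt produces an interior critical point $\gamma_{\mathrm{waist}}$ satisfying $\SSS_e(\gamma_{\mathrm{waist}})<m'$ and $p(\gamma_{\mathrm{waist}})>\tau+\delta>\tau$. As a local minimizer of $\SSS_e$, $\gamma_{\mathrm{waist}}$ is the required waist; setting $\cw(L,\tau):=c(L)+m'/P$ completes the argument. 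The main obstacle I foresee is the test-curve construction: the closing-lemma step producing $P$-periodic orbits in $\Aubry(L)$ of prescribed large period from non-stationary recurrent trajectories is where the Aubry hypothesis is exploited most subtly.
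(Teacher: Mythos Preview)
Your architecture matches the paper's: a positive barrier $m_\tau>0$ on the slice $\{p=\tau\}$, a test curve of period $>\tau$ with action below the barrier, and then minimization of $\SSS_e$ on $\{p>\tau\}$. The first and third steps are essentially the paper's (your extra barrier on $[\tau,\tau+\delta]$ is unneeded, since for $e>c(L)$ the full Palais-Smale condition lets one minimize directly on $W^{1,2}(\R/\Z,M)\times(\tau,\infty)$).

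The genuine gap is in the test-curve step, exactly where you flagged it. You attempt to upgrade the closed $W^{1,2}$ curve produced by the closing construction to a genuine $P$-periodic orbit in $\Aubry(L)$ by minimizing $\SSS_{c(L)}$ at fixed period $P$ and reusing your first-paragraph argument. But that argument applies only when the fixed-period infimum vanishes, and the closing construction does not force this: as the action of the closed curve is pushed toward zero, its period drifts (the recurrence times grow), so you never drive $m_P$ to zero for a \emph{fixed} $P$. Worse, the target conclusion is false in general: for $L(q,v)=\tfrac12|v-X|^2$ on $\T^2$ with $X$ a constant vector of irrational slope, one has $c(L)=0$, the Aubry set is the graph of $X$, and it contains \emph{no} periodic orbits at all, although the lemma's hypothesis holds for every $\tau>0$.

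The repair is simply to drop the upgrade: a $W^{1,2}$ closed curve with small $\SSS_{c(L)}$-action and period $>\tau$ already serves as $\gamma_*$; it need not be an orbit. The paper builds one even more directly, without recurrence or Mather measures: take any $(q,v)\in\Aubry(L)$, follow the Euler-Lagrange flow for time $\tau$ to obtain $\gamma|_{[0,\tau]}$ with $\SSS_{c(L)}(\gamma|_{[0,\tau]})=-\Phi_{c(L)}(\gamma(\tau),\gamma(0))$, and append an arc from $\gamma(\tau)$ to $\gamma(0)$ whose action is within $m_\tau/2$ of $\Phi_{c(L)}(\gamma(\tau),\gamma(0))$. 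The resulting periodic $W^{1,2}$ curve has period $p>\tau$ and $\SSS_{c(L)}<m_\tau/2$, and your final paragraph then applies verbatim.
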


\begin{rem}
The waist with period larger than $\tau$ provided by Lemma~\ref{l:waist} may well be the iterate of another periodic orbit (which must also be a waist) whose period is less than $\tau$. Nevertheless, the lower bound $\tau$ for the period of the waist is a significant information when $M$ is not a closed orientable surface, because in such a case it is possible that certain iterates of a waist are not themselves waists.
\end{rem}

\begin{proof}[Proof of Lemma~\ref{l:waist}]
If $\tau>0$ is such that there are no $\tau$-periodic orbits in the Aubry set $\Aubry(L)$, we claim that 
\begin{align*}
\delta:=\inf \SSS_{c(L)}|_{\W(\R/\Z,M)\times\{\tau\}}>0. 
\end{align*}
Indeed, since the restriction of $\SSS_{c(L)}$ to $\W(\R/\Z,M)\times\{\tau\}$ satisfies the Palais-Smale condition, there exists $\gamma=(\Gamma,\tau)\in\W(\R/\Z,M)\times\{\tau\}$ such that 
$\SSS_{c(L)}(\gamma)=\delta$.
If $\delta=0$, since $\SSS_{c(L)}$ is a non-negative function, we would have $\SSS_{c(L)}(\gamma)=\inf \SSS_{c(L)}=0$. Therefore, $\gamma$ would be a $\tau$-periodic orbit of $L$ with energy $c(L)$, which is impossible by our choice of $\tau$.

Now, fix a point $(q,v)\in\Aubry(L)$, and consider the solution $\gamma:[0,\tau]\to M$ of the Euler-Lagrange equation of $L$ such that $\gamma(0)=q$ and $\dot\gamma(0)=v$. Since $\SSS_{c(L)}(\gamma)+\Phi_{c(L)}(\gamma(\tau),\gamma(0))=0$, we can extend $\gamma$ to a $\W$ periodic curve $\gamma:\R/p\Z\to M$, for some $p>\tau$, such that $\SSS_{c(L)}(\gamma)<\delta/2$. For each energy value $e\in(c(L),c(L)+\tfrac{\delta}{2p})$, we have 
\begin{align*}
\SSS_e(\gamma)=\SSS_{c(L)}(\gamma)+(e-c(L))p<\delta<\delta+(e-c(L))\tau=\inf \SSS_{e}|_{\W(\R/\Z,M)\times\{\tau\}}, 
\end{align*}
and therefore
\begin{align*}
\delta_0:=\inf \SSS_{e}|_{\W(\R/\Z,M)\times(\tau,\infty)} < \inf \SSS_{e}|_{\W(\R/\Z,M)\times\{\tau\}}.
\end{align*}
Since $e>c(L)$, $\SSS_e$ satisfies the Palais-Smale condition. Therefore, any sequence $\gamma_n=(\Gamma_n,p_n)\in\W(\R/\Z,M)\times(\tau,\infty)$ such that $\SSS_e(\gamma_n)<\delta_0+1/n$ admits a subsequence converging to a local minimizer $\gamma=(\Gamma,p)\in\W(\R/\Z,M)\times(\tau,\infty)$ of $\SSS_e$, which is a waist.
\end{proof}

\begin{proof}[Proof of Theorem~\ref{t:contractible_waist}]
Since $M$ has finite fundamental group, its universal cover $\widetilde M$ has  finite degree over $M$, and in particular is compact. We denote by $\widetilde L:\Tan\widetilde M\to\R$ the lift of $L$. By \cite[Lemma~2.2]{Contreras:2002xw}, the Ma\~n\'e critical value does not change when a Lagrangian system is lifted to a finite cover, and therefore $c(L)=c(\widetilde L)$. Notice that a periodic curve $\gamma:\R/p\Z\to \widetilde M$ is a local minimizer of the free-period action functional associated to $\widetilde L$ if and only if its projection $\pi\circ\gamma$ is a local minimizer of $\SSS_{c(L)}$. Therefore, it is enough to prove the theorem for the Lagrangian $\widetilde L$, that is, from now on we can assume without loss of generality that $M$ is simply connected.

Since $c(L)>e_0(L)$, for all $\tau>0$ small enough, each solution $\gamma:[0,\tau]\to M$ of the Euler-Lagrange equation of $L$ with energy $E(\gamma(0),\dot\gamma(0))=c(L)$ satisfies  $\gamma(0)\neq\gamma(\tau)$, see \cite[Lemma~2.3(i)]{Asselle:2016qv}. In particular, for such $\tau>0$, the Aubry set $\Aubry(L)$ does not contain $\tau$-periodic orbits. Therefore, by Lemma~\ref{l:waist}, $L$ possesses at least a (contractible) waist with energy $e$ and period larger than $\tau$.
\end{proof}

\section{Periodic orbits of hyperbolic or twist type}
\label{s:hyp_twist}

\subsection{The cylinder of a non-degenerate periodic orbit}
\label{s:cylinder}

Let $(W,\omega)$ be a symplectic manifold of dimension $2d$, and $H:W\to\R$ a smooth Hamiltonian with associated Hamiltonian vector field $X_H$, given by $\omega(X_H,\cdot)=\diff H$, and Hamiltonian
 flow $\phi_H^t$. Consider a periodic orbit $\gamma:\R/\tilde p\Z\to H^{-1}(\tilde e)$, $\gamma(t)=\phi_H^t(\tilde z)$ on a regular energy hypersurface $H^{-1}(\tilde e)$. Any sufficiently small hypersurface $\Sigma\subset H^{-1}(\tilde e)$ that intersects $\gamma$ transversely at $\tilde z$ is a symplectic submanifold of $(W,\omega)$, and has a well defined associated first return-time function
\begin{align*}
\tau:\Sigma\to(\tilde p-\epsilon,\tilde p+\epsilon),
\qquad
\tau(z)=\min\{t>0\ |\ \Phi_H^t(z)\in\Sigma\}.
\end{align*}
The first return map
\begin{align*}
\Phi:(\Sigma,\omega)\to(\Sigma,\omega),
\qquad
\Phi(z)=\phi_H^{\tau(z)}(z),
\end{align*}
is a symplectomorphism. Its differential at the fixed point $\tilde z$ is the so-called Poincar\'e map $P:=\diff\Phi(\tilde z):\Tan_{\tilde z}\Sigma\to\Tan_{\tilde z}\Sigma$. 
The periodic orbit $\gamma$ is non-degenerate when 1 is not an eigenvalue of $P$. In such a case, $\gamma$ belongs to a so-called \textbf{orbit cylinder}: there exist smooth maps $e\mapsto z(e)\in W$ and $e\mapsto p(e)\in(0,\infty)$, for $e\in(\tilde e-\delta,\tilde e+\delta)$, such that $z(\tilde e)=\tilde z$, $p(\tilde e)=\tilde p$, $H(z(e))=e$, and each orbit $\gamma_e(t):=\phi_H^t(z(e))$ is $p(e)$-periodic (see, e.g., \cite[Prop.~2 on page 110]{Hofer:1994bq} for a proof of this fact). The periodic orbit $\gamma$ is \textbf{hyperbolic} if no eigenvalue of $P$ lies in the unit circle $S^1\subset\C$. Notice that, if $\gamma$ is hyperbolic, the same is true for the periodic orbits in a small orbit cylinder around $\gamma$.

\subsection{Periodic orbits of twist type}
Assume now that $\gamma$ is not hyperbolic. The tangent space at $\tilde z$ splits as the direct sum of $P$-invariant symplectic subspaces $\Tan_{\tilde z}\Sigma=\Tan_{\tilde z}^{\pm}\Sigma\oplus\Tan_{\tilde z}^{0}\Sigma$ in such a way that $P|_{\Tan_{\tilde z}^{\pm}\Sigma}$ has only eigenvalues outside the unit circle $S^1\subset\C$, whereas $P|_{\Tan_{\tilde z}^{0}\Sigma}$ has only eigenvalues on the unit circle. By our assumption, $\dim \Tan_{\tilde z}^{0}\Sigma>0$. The first return map $\Phi$ admits a so-called central manifold $\Wc\subset\Sigma$ at ${\tilde z}$, which is an embedded submanifold of $\Sigma$ that passes through $\tilde z$ with tangent space $\Tan_{\tilde z}\Wc=\Tan_{\tilde z}^{0}\Sigma$ and is invariant under $\Phi$ near $\tilde z$, see \cite[Theorem~3.3.5]{Klingenberg:1978so}. We denote by $S^1\subset\C$ the unit circle in the complex plane, and by $\sigma(P)$ the set of eigenvalues of the $P$. The Poincar\'e map $P$ is called \textbf{$4$-elementary} when for all quadruples of (not necessarily distinct) eigenvalues $e^{i\theta_1},e^{i\theta_2},e^{i\theta_3},e^{i\theta_4}\in\sigma(P)\cap S^1$ we have $e^{i(\theta_1+...+\theta_4)}\neq1$. In this case, the restriction $\Phi|_{\Wc}$ can by written in a so-called Birkhoff normal form near $\tilde z$: up to shrinking $\Wc$ around $\tilde z$, we can find symplectic coordinates $(z_1,...,z_q)=(x_1,y_1,...,x_q,y_q)$ on $\Wc$ such that, in these coordinates, $\tilde z=0$ and 
\begin{gather*}
\Phi|_{U}=(\Phi_1,...,\Phi_q):U\to\Wc,
\\
\Phi_j(z_1,...,z_q)=z_j\exp\left(a_j + 2\pi i\sum_{l=1}^q b_{jl}z_l\overline{z_{l}} \right) + w_j(z_1,...,z_q).
\end{gather*}
Here, $U\subset\Wc$ is a sufficiently small open neighborhood of $\tilde z$, the $a_j$'s and the $b_{jl}$'s are real numbers, and the maps $w_j:U\to\C$ vanish up to order $3$ at $\tilde z$. The fixed point $\tilde z$ is of \textbf{twist type} (or the periodic orbit $\gamma$ is of twist type) when the real $q\times q$ matrix $B:=(b_{jl})$ is non-singular. The celebrated Birkhoff-Lewis fixed point theorem \cite[Theorem~3.3.A.1]{Klingenberg:1978so} states that, if the fixed point $\tilde z$ of $\Phi$ is of twist type, then there exists a sequence $z_\alpha\to \tilde z$ as $\alpha\to\infty$ such that $z_\alpha$ is a periodic point of $\Phi$ with minimal period $p_\alpha$, and $p_\alpha\to\infty$ as $\alpha\to\infty$. In terms of the original Hamiltonian system, the statement can be rephrased as follows.

\begin{thm}[Hamiltonian Birkhoff-Lewis Theorem]\label{t:Birkhoff_Lewis}
Let $(W,\omega)$ be a symplectic manifold, $H:W\to\R$ a Hamiltonian, $e$ a regular value of $H$, and $\gamma:\R/p\Z\to H^{-1}(e)$ a periodic orbit of $H$ of twist type. Then, for any $p'>0$ there exists an arbitrarily small neighborhood $U\subset H^{-1}(e)$ of the support of $\gamma$ and a periodic orbit of $H$ contained in $U$ with minimal period larger than $p'$. In particular, the energy level $H^{-1}(e)$ contains infinitely many periodic orbits.
\hfill\qed
\end{thm}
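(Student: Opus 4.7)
The plan is to reduce to the discrete Birkhoff--Lewis fixed point theorem already recalled, applied to a Poincar\'e section transverse to~$\gamma$. Fix a base point $\tilde z$ on the support of $\gamma$ and construct, as in Subsection~\ref{s:cylinder}, a local symplectic hypersurface $\Sigma\subset H^{-1}(e)$ transverse to $\gamma$ at $\tilde z$, together with its first-return time $\tau\colon\Sigma\to(0,\infty)$ (so that $\tau(\tilde z)=p$) and first-return map $\Phi\colon \Sigma\to\Sigma$. By hypothesis $\tilde z$ is a fixed point of $\Phi$ of twist type, so the version of the Birkhoff--Lewis theorem recalled just above produces a sequence $z_\alpha\to \tilde z$ of periodic points of $\Phi$ whose minimal (discrete) periods $p_\alpha$ tend to infinity; moreover, the standard proof via the Birkhoff normal form on the central manifold yields orbits $\{\Phi^k(z_\alpha)\mid 0\le k<p_\alpha\}$ sitting on invariant tori that shrink to $\tilde z$, so the whole orbit is contained in a neighborhood $V_\alpha$ of $\tilde z$ in $\Sigma$ with $V_\alpha$ collapsing to $\{\tilde z\}$ as $\alpha\to\infty$.

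I would then translate discrete periodicity into flow periodicity by setting
\begin{align*}
T_\alpha:=\sum_{k=0}^{p_\alpha-1}\tau\bigl(\Phi^k(z_\alpha)\bigr),
\end{align*}
so that $\phi_H^{T_\alpha}(z_\alpha)=z_\alpha$ and $z_\alpha$ sits on a periodic orbit $\gamma_\alpha$ of $X_H$. Any smaller positive time $T'<T_\alpha$ with $\phi_H^{T'}(z_\alpha)=z_\alpha$ would satisfy $\phi_H^{T'}(z_\alpha)\in\Sigma$ and, by the very definition of the first-return time, would have the form $T'=\sum_{k=0}^{j-1}\tau(\Phi^k(z_\alpha))$ for some $0<j<p_\alpha$; this would force $\Phi^j(z_\alpha)=z_\alpha$, contradicting the minimality of $p_\alpha$. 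Hence $T_\alpha$ is the minimal flow-period of $\gamma_\alpha$. Continuity of $\tau$ at $\tilde z$, combined with the confinement of the $\Phi$-orbit to $V_\alpha$, gives $\tau(\Phi^k(z_\alpha))\ge p/2$ for every $k$ once $\alpha$ is large, so $T_\alpha\ge p\,p_\alpha/2\to\infty$ and in particular $T_\alpha>p'$ eventually.

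Finally, given any prescribed neighborhood $U\subset H^{-1}(e)$ of the support of $\gamma$, uniform continuity of $\phi_H^t$ on the compact set $\overline V_\alpha\times[0,2p]$ ensures that for $\alpha$ sufficiently large every flow-arc $\{\phi_H^t(\Phi^k(z_\alpha))\mid 0\le t\le \tau(\Phi^k(z_\alpha))\}$ lies inside $U$, since its endpoints are close to $\tilde z\in\Sigma$ and its length is close to $p$. Thus $\gamma_\alpha\subset U$ with minimal period $T_\alpha>p'$, which is exactly the displayed statement; iterating with shrinking $U$ and diverging $p'$ yields the infinitely many distinct periodic orbits on $H^{-1}(e)$. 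The only delicate point is confining the whole $\Phi$-orbit of $z_\alpha$, and not just its initial point, to a small neighborhood of $\tilde z$; this is needed both to control the flow-period and to keep $\gamma_\alpha$ inside the prescribed $U$, and it is a by-product of Moser's proof of Birkhoff--Lewis, where the periodic points are produced as intersections of $\Phi$ with KAM-type invariant tori contained in the Birkhoff normal form neighborhood of the fixed point on the central manifold.
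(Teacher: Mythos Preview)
The paper does not actually prove this theorem: it is stated with a \qed symbol and is presented as a direct rephrasing, in terms of the Hamiltonian flow, of the discrete Birkhoff--Lewis fixed point theorem quoted immediately before it from \cite[Theorem~3.3.A.1]{Klingenberg:1978so}. Your proposal is precisely the translation the paper leaves implicit, and it is carried out correctly: you pass to a Poincar\'e section, invoke the discrete statement for the first-return map, sum the return times to obtain the flow-period, and verify both minimality of that period and confinement of the resulting orbit in any prescribed neighborhood of $\gamma$.

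You are right to flag the confinement of the entire $\Phi$-orbit of $z_\alpha$ (and not merely of $z_\alpha$ itself) as the one nontrivial point; this is indeed delivered by Moser's proof, where the periodic points are produced inside an arbitrarily small normal-form neighborhood on the central manifold, so that the whole discrete orbit lies there. With that in hand, your bound $T_\alpha\ge p\,p_\alpha/2$ and the argument that $\gamma_\alpha\subset U$ go through. In short: your proof is correct and matches the paper's intended (but unwritten) reduction to the cited discrete theorem.
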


\begin{rem}
\label{r:orbit_cylinder}
Since the Poincar\'e map of a periodic orbit $\gamma:\R/\tilde p\Z\to H^{-1}(\tilde e)$ of twist type is 4-elementary, all its eigenvalues on the unit circle have multiplicity 1 and are not real. In particular, 1 is not an eigenvalue of the Poincar\'e map, and therefore $\gamma$ belongs to an orbit cylinder $\gamma_e:\R/p_e\Z\to H^{-1}(e)$, $e\in(\tilde e-\delta,\tilde e+\delta)$. The condition of being $4$-elementary for a symplectic linear map is open, and the condition of being twist type is also open in the $C^\infty$ topology. Therefore, up to lowering $\delta>0$, all the periodic orbits $\gamma_e$ in the orbit cylinder are of twist type, and the Hamiltonian Birkhoff-Lewis Theorem apply to them.
\hfill\qed
\end{rem}

In view of the Birkhoff-Lewis fixed point theorem, in the quest for periodic orbits of a given Tonelli energy hypersurface one may assume that there is no periodic orbit of twist type. In such a case, generically, all the periodic orbits (if there are any) must be hyperbolic. This is guaranteed by the following result due to Carballo and Miranda \cite{Carballo:2013fc}, which extends a result of Klingenberg and Takens \cite{Klingenberg:1972dp} for geodesic flows. The notions of hyperbolicity or of twist type for the periodic orbits of a Tonelli Lagrangian are those inherited from the corresponding periodic orbits of the dual Tonelli Hamiltonian.

\begin{thm}[\cite{Carballo:2013fc}, Corollary~5]
\label{t:Carballo_Miranda}
Let $M$ be a closed manifold, $L:\Tan M\to\R$ a Tonelli Lagrangian, and $e\in\R$. Then there exists a residual subset $\UU\subset C^\infty(M)$ such that, for each $U\in\UU$, each periodic orbit of the Tonelli Lagrangian $L-U$ with energy $e$ is either hyperbolic or of twist type\footnote{In \cite{Carballo:2013fc}, the authors call ``weakly monotonous quasi elliptic'' a periodic orbit of twist type.}.
\hfill\qed
\end{thm}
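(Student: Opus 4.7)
The plan is to prove this by a Baire category argument following Klingenberg-Takens, with the Tonelli analogue of their jet-transversality computation providing the Lagrangian-specific input.

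First, I would fix an integer $N\geq 1$ and denote by $\UU_N\subset C^\infty(M)$ the set of potentials $U$ such that every periodic orbit of $L-U$ with energy $e$ and period at most $N$ is either hyperbolic or of twist type. The target residual set would be $\UU:=\bigcap_N \UU_N$. Openness of $\UU_N$ is the easier half: hyperbolicity is an open condition on the Poincaré map, and being of twist type is, by Remark~\ref{r:orbit_cylinder}, an open condition on its $3$-jet (the Poincaré map being $4$-elementary, hence $1\notin\sigma(P)$, forces the periodic orbit to sit in an orbit cylinder so one can track it under perturbation, and non-singularity of the Birkhoff matrix $B=(b_{jl})$ is itself open). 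A compactness argument in the space of periodic orbits of period $\leq N$ on a neighbourhood of $E^{-1}(e)$ then yields openness of $\UU_N$.

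The substantive step is density of $\UU_N$. I would proceed by finite induction on periodic orbits: starting from $U_0\in C^\infty(M)$ arbitrary, list finitely many representative periodic orbits (up to orbit equivalence) of $L-U_0$ of period $\leq N$ and energy $e$, and destroy one after the other the failure of the alternative ``hyperbolic or twist'', using perturbations localized in pairwise disjoint small open sets, each meeting the support of only one of the chosen orbits. The key local lemma I would need is a \emph{jet-realizability statement}: given a periodic orbit $\gamma$ of $L-U$ of energy $e$, a transverse section $\Sigma$ at $\tilde z$, and an integer $k\geq 3$, the map
\begin{equation*}
V\longmapsto j^k_{\tilde z(V)}\Phi_V
\end{equation*}
from small potentials $V$ supported near a single point of $\gamma$ into the space of $k$-jets of symplectic diffeomorphism germs fixing a point (modulo the constraint of preserving the energy level and the orbit cylinder from Section~\ref{s:cylinder}) is surjective onto a neighbourhood of the unperturbed jet. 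Granted this, one uses the fact that in the model space of symplectic $k$-jets at a fixed point the subset ``hyperbolic or of twist type'' is open and dense (its complement is a semi-algebraic stratum of positive codimension, carved out by the failure of $4$-elementarity together with the vanishing of $\det B$). A small generic $V$ then places the jet outside this stratum.

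The main obstacle is precisely the jet-realizability lemma, because potentials provide only an $n$-dimensional family of perturbations at each point rather than the full $\frac{n(n+1)}{2}$ directions available when perturbing the Lagrangian itself. The way I would handle it is to write the perturbation of the Hamiltonian vector field induced by $V$ along $\gamma$ as a lower-triangular block in the $(q,p)$-decomposition and then integrate the variational equation along $\gamma$ to show that, by choosing the Taylor expansion of $V$ at a regular point of $\gamma$ up to order $k+1$, one can span every direction in the $k$-jet of $\Phi$ transverse to the constraints dictated by symplecticity and energy preservation. This is exactly the computation carried out by Carballo-Miranda (extending Klingenberg-Takens's geodesic case), and is the technical heart of the statement; everything else is a standard Sard-Smale / Baire wrap-up.
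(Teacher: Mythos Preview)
The paper does not prove this theorem. The statement is quoted verbatim as Corollary~5 of \cite{Carballo:2013fc}, ends with a \texttt{\textbackslash hfill\textbackslash qed}, and is used as a black box in the subsequent proofs of Corollary~\ref{c:supercritical_mult} and Theorem~\ref{t:btz}. There is therefore no ``paper's own proof'' to compare your proposal against.

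Your sketch is a reasonable outline of the Klingenberg--Takens strategy that Carballo--Miranda adapt, and you correctly identify the genuine technical obstacle: potentials only move the $0$-jet of $L$ in the fibre direction, so one has to work harder than in the Riemannian case to show that the induced map into the jet space of Poincar\'e maps is submersive onto the relevant strata. One point where your write-up is loose: in the density step you propose to ``list finitely many representative periodic orbits'' of period $\leq N$ and perturb them one by one, but there is no a priori reason the set of such orbits is finite (or even discrete) before the perturbation. The actual argument runs through a parametrized transversality/Sard--Smale theorem on the space of pairs (potential, periodic orbit) rather than a finite enumeration. If you want to reconstruct the proof rather than cite it, that is the piece to tighten.
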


\subsection{Hyperbolic periodic orbits of Tonelli Lagrangians}

Let $L:\Tan M\to\R$ be a Tonelli Lagrangian, with associated free period action functionals  $\SSS_e : \W(\R/\Z)\times(0,\infty)\to\R\cup\{+\infty\}$.
We denote by $\ind(\gamma)$ and $\nul(\gamma)+1$ the Morse index and the nullity of $\SSS_e$ at a critical point $\gamma=(\Gamma,p)$, and by $\ind_p(\gamma)$ and $\nul_p(\gamma)+1$ the Morse index and the nullity of the restricted functional $\SSS_e(\cdot,p)$ at the critical point $\Gamma$. Notice that $\ind_p(\gamma)\leq\ind(\gamma)\leq\ind_p(\gamma)+1$.
If $P$ denotes the Poincar\'e map associated to the periodic orbit $\gamma$, then 
\begin{align*}
\nul(\gamma)=\dim\ker(P-I),
\end{align*}
see \cite[Proposition~A.3]{Abbondandolo:2015lt}.
For each $m\in\N$, we denote by $\gamma^m=(\Gamma^m,mp)$, where $\Gamma^m(t)=\Gamma(mt)$, the $m$-th iterate of $\gamma$. Namely, $\gamma^m$ is $\gamma$ seen as an $mp$-periodic curve. If $\gamma$ is hyperbolic, Bott's iteration theory \cite{Bott:1956sp, Long:2002ed} implies that
\begin{align}
\label{e:Bott}
\ind_{mp}(\gamma^m)=m\,\ind_p(\gamma),
\qquad
\nul(\gamma^m)=0,
\qquad
\forall m\in\N.
\end{align}
In particular, the critical circle $\CC_\gamma:=\big\{(\Gamma(s+\cdot),p)\ \big|\ s\in\R/\Z\big\}$ is isolated in the set of critical points $\crit(\SSS_e)$. 
Notice that a hyperbolic periodic orbit $\gamma$ is a waist (that is,  $\CC_\gamma$ is a circle of local minimizers of $\SSS_e$) if and only if $\ind(\gamma)=0$, and in this case $\CC_\gamma$ has an arbitrarily small open neighborhood $\WW\subset \W(\R/\Z)\times(0,\infty)$ such that
\begin{align}
\label{e:nbhd_of_waist}
\inf_{\partial\WW} \SSS_e>\SSS_e(\gamma). 
\end{align}

\begin{lem}
\label{l:cylinder_hyperbolic_waist}
If $\gamma$ is a hyperbolic waist, the same is true for the periodic orbits in a small orbit cylinder around $\gamma$.
\end{lem}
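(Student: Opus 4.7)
The plan is to combine the orbit cylinder construction from Section~\ref{s:cylinder}, the openness of hyperbolicity noted there, and the variational characterization of a waist provided by \eqref{e:nbhd_of_waist}. Write $\tilde e:=E(\gamma,\dot\gamma)$ and let $\tilde p$ be the period of $\gamma$. Since $\gamma$ is hyperbolic it is in particular non-degenerate, so Section~\ref{s:cylinder} yields a smooth orbit cylinder $\gamma_e=(\Gamma_e,p_e)$, $e\in(\tilde e-\delta,\tilde e+\delta)$, with $\gamma_{\tilde e}=\gamma$; by openness of hyperbolicity of the Poincar\'e map I may shrink $\delta$ so that every $\gamma_e$ remains hyperbolic. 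Only the waist property for $\gamma_e$ remains to be established.

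Using \eqref{e:nbhd_of_waist}, I first fix an open neighborhood $\WW\subset\MM$ of $\CC_\gamma$ with $\overline\WW\subset\W(\R/\Z,M)\times(p_1,p_2)$ for some $0<p_1<\tilde p<p_2<\infty$, such that $\inf_{\partial\WW}\SSS_{\tilde e}>\SSS_{\tilde e}(\gamma)$. Shrinking $\WW$ further, and using that the hyperbolic orbit $\gamma$ makes $\CC_\gamma$ a non-degenerate critical manifold, I may additionally assume that $\CC_\gamma$ is the only critical circle of $\SSS_{\tilde e}$ contained in $\overline\WW$. On $\WW$ one has the pointwise identity $\SSS_e(\Gamma,p)-\SSS_{\tilde e}(\Gamma,p)=(e-\tilde e)p$, hence the uniform estimate
\begin{align*}
\sup_{(\Gamma,p)\in\overline\WW}\bigl|\SSS_e(\Gamma,p)-\SSS_{\tilde e}(\Gamma,p)\bigr|\leq|e-\tilde e|\,p_2.
\end{align*}
Combined with the continuous dependence $e\mapsto\gamma_e$, which places $\CC_{\gamma_e}\subset\WW$ for $e$ near $\tilde e$, this makes the strict inequality persist:
\begin{align*}
\inf_{\partial\WW}\SSS_e>\SSS_e(\gamma_e)
\end{align*}
for all $e$ sufficiently close to $\tilde e$.

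Since the restriction $\SSS_e|_{\W(\R/\Z,M)\times[p_1,p_2]}$ satisfies the Palais-Smale condition (as recalled before Lemma~\ref{l:waist}), $\SSS_e$ attains its infimum over $\overline\WW$ at some $\eta_e$ lying in the interior of $\WW$, and $\eta_e$ is then a critical point of $\SSS_e$. The decisive step, and the main obstacle, is to identify $\eta_e$ with a reparametrization of $\gamma_e$. This follows from the non-degeneracy of the critical manifold $\CC_\gamma$: an implicit function theorem argument applied to $d\SSS_e$, or equivalently to the first-return map minus the identity on a Poincar\'e section transverse to $\gamma$, shows that after a final shrinking of $\WW$ the critical set of $\SSS_e$ inside $\WW$ coincides with $\CC_{\gamma_e}$ for every $e$ close enough to $\tilde e$. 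Consequently $\eta_e\in\CC_{\gamma_e}$, which shows that $\gamma_e$ is a local minimizer of $\SSS_e$, i.e.\ a waist.
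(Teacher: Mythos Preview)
Your argument is correct, but it takes a substantially different route from the paper. The paper's proof is essentially two lines of index theory: since each $\gamma_e$ in the orbit cylinder is hyperbolic, one has $\nul(\gamma_e)=0$ throughout, and the Morse index $e\mapsto\ind(\gamma_e)$ is therefore locally constant; thus $\ind(\gamma_e)=\ind(\gamma_{\tilde e})=0$, which by the equivalence stated just before~\eqref{e:nbhd_of_waist} means $\gamma_e$ is a waist. By contrast, you bypass index theory entirely and work directly with the variational characterization~\eqref{e:nbhd_of_waist}: you perturb the sublevel inequality on $\partial\WW$ using $\SSS_e-\SSS_{\tilde e}=(e-\tilde e)p$, extract an interior minimizer via Palais--Smale on a bounded period range, and then identify this minimizer with $\gamma_e$ using the implicit function theorem for the non-degenerate critical manifold $\CC_\gamma$. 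Your approach is more hands-on and closer in spirit to a direct stability argument for local minimizers under $C^0$-perturbations of the functional; the paper's approach is shorter because the heavy lifting (the equivalence of ``hyperbolic waist'' with ``$\ind=0$'') has already been packaged. Both are valid; the index argument is the more economical one once that equivalence is on the table.
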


\begin{proof}
Let $\gamma_{\tilde e}$ be a hyperbolic waist with energy $\tilde e$, which belongs to a hyperbolic cylinder $\gamma_e$, $e\in(\tilde e-\delta,\tilde e+\delta)$. We already remarked that, up to lowering $\delta>0$, all the periodic orbits $\gamma_e$  are hyperbolic. In particular, $\nul(\gamma_e)=0$ for all energy values  $e\in(\tilde e-\delta,\tilde e+\delta)$. This readily implies that $e\mapsto\ind(\gamma_e)$ is constant, and therefore $\ind(\gamma_e)=\ind(\gamma_{\tilde e})=0$.
\end{proof}

\section{Generic multiplicity results}
\label{s:mult}

We begin by proving Corollary~\ref{c:supercritical_mult} and Theorem~\ref{t:btz}, for which we have already introduced all the needed ingredients.

\begin{proof}[Proof of Corollary~\ref{c:supercritical_mult}]
By Carballo-Miranda's Theorem~\ref{t:Carballo_Miranda}, for each $e\in\R$ there exists a residual subset $\UU_e\subset C^\infty(M)$ such that, for each $U\in\UU_e$, each periodic orbit of the Tonelli Lagrangian $L-U$ with energy $e$ (if there is any) is either hyperbolic or of twist type. The countable intersection
\begin{align*}
\UU:=\bigcap_{e\in\Q} \UU_e
\end{align*}
is still a residual subset of $C^\infty(M)$. Let $U\in\UU$ be such that the Tonelli Lagrangian $L':=L-U$ satisfies $e_0(L')<c(L')$. From now on, all the arguments will be referred to the Lagrangian $L'$, which can be assumed to be fiberwise quadratic outside a large compact set without loss or generality (see Section~\ref{s:waists}), and whose free-period action functional at energy $e$ will be denoted by $\SSS_e':\MM\to\R$.

We choose an arbitrary energy value 
\[\tilde e\in(c(L'),\cw(L'))\cap\Q,\] 
where $\cw(L')$ is the constant given by Theorem~\ref{t:contractible_waist}. In order to complete the proof, we have to show that all the energy levels sufficiently close to $\tilde e$ possess infinitely many periodic orbits. If there exists a periodic orbit of twist type in the energy level $\tilde e$, the same is true on all energy levels close to $\tilde e$ (Remark~\ref{r:orbit_cylinder}), and we conclude by means of the Hamiltonian Birkhoff-Lewis Theorem (Theorem~\ref{t:Birkhoff_Lewis}). It remains to consider the case in which all the periodic orbits with energy $\tilde e$ are hyperbolic. By Theorem~\ref{t:contractible_waist}, there exists a contractible hyperbolic waist $\gamma_{\tilde e}$ with energy $\tilde e$. By Lemma~\ref{l:cylinder_hyperbolic_waist}, $\gamma_{\tilde e}$ belongs to an orbit cylinder $\gamma_e$, $e\in(\tilde e-\delta,\tilde e+\delta)$, such that each $\gamma_e$ is a hyperbolic waist. Let us fix an energy value $e\in(\tilde e-\delta,\tilde e+\delta)$, and simply call $\gamma:=\gamma_{e}$ in order to simplify the notation. Notice that $\SSS_{e}'(\gamma_{e})>0$, since $e>c(L')$, and therefore $\SSS_{e}'(\gamma^k)\to\infty$ as $k\to\infty$. Moreover, by~\eqref{e:Bott}, $\ind(\gamma^k)=\nul(\gamma^k)=0$ for all $k\in\N$, and therefore every iterate $\gamma^k$ is still a hyperbolic waist. We  consider the minmax values
\begin{align}
\label{e:minmax_sk_00}
s(k):=\inf_u \max_{s\in[0,1]} \SSS_e'(u(s)),
\end{align}
where the infimum ranges over the continuous maps $u:[0,1]\to\MM$ such that $u(0)=\gamma$ and $u(1)=\gamma^k$. By~\eqref{e:nbhd_of_waist}, we have the strict inequality $s(k)>\SSS_e'(\gamma^k)$, and in particular
\begin{align}
\label{e:s(k)_to_infty}
\lim_{k\to\infty} s(k)=+\infty.
\end{align}
Since $e>c(L')$, $\SSS_e'$ satisfies the Palais-Smale condition, and therefore $s(k)$ is a critical value of $\SSS_e'$ corresponding to at least a contractible periodic orbit. We denote by $\PP_k$ the intersection of the critical set $\crit(\SSS_e')\cap \SSS_e'^{-1}(s(k))$ with the connected component of $\MM$ of the contractible periodic curves.

Now, the end of the argument is a typical application of \cite[Theorem~2.6]{Abbondandolo:2014rb}, whose validity was extended to general Tonelli systems in \cite[Section~4.1]{Asselle:2016qv}. This theorem asserts that highly iterated periodic orbits are not mountain passes. More precisely, for each periodic orbit $\zeta$ with energy $e$, there exists $\overline m(\zeta)\in\N$ and, for each integer $m\geq\overline m(\zeta)$, an open neighborhood $\UU(\zeta^m)\subset\MM$ of the critical circle of $\zeta^m$ such that the inclusion is an injective map of connected components
\begin{align}
\label{e:injectivity_pi0}
\pi_0(\{\SSS_e'<\SSS_e'(\zeta^m)\}) \hookrightarrow \pi_0(\{\SSS_e'<\SSS_e'(\zeta^m)\}\cup\UU(\zeta^m)).
\end{align}

Assume by contradiction that there are finitely many geometrically distinct contractible periodic orbits $\zeta_{1},...,\zeta_{h}$ such that, for every $k\in\N$, every periodic orbit in $\PP_k$ is an iterate of one of them. We set 
\[\overline m:=\max\{\overline m(\zeta_1),...,\overline m(\zeta_h)\}.\]
By~\eqref{e:s(k)_to_infty}, if $k\in\K$ is large enough, every critical circle in $\PP_k$ is the critical circle of some iterated periodic orbit  $\zeta_i^m$ with $m\geq\overline m$. Let $\UU_k$ be the union of the open neighborhoods $\UU(\zeta_i^m)$, for all $\zeta_i^m\in\PP_k$. By means of gradient flow deformations, we can find a continuous map $u:[0,1]\to\MM$ such that $u(0)=\gamma^{nk}$, $u(1)=\gamma^{mk}$, and $u([0,1])\subset\{\SSS_e'<s(k)\}\cup\UU_k$. By~\eqref{e:injectivity_pi0}, we can modify $u$ into a continuous map $w:[0,1]\to\MM$ such that $w(0)=\gamma$, $w(1)=\gamma^{k}$, and $w([0,1])\subset\{\SSS_e'<s(k)\}$. The strict inequality $\max \SSS_e'\circ w<s(k)$ contradicts the definition of the minmax value~\eqref{e:minmax_sk_00}.
\end{proof}

\begin{proof}[Proof of Theorem~\ref{t:btz}]
As in the proof of Corollary~\ref{c:supercritical_mult}, by applying  Carballo-Miranda's Theorem~\ref{t:Carballo_Miranda} countably many times we obtain a residual subset of $\UU\subset C^\infty(M)$ such that, for all $U\in\UU$, each periodic orbit of the Tonelli Lagrangian $L':=L-U$ with any energy $e\in\Q$ is either hyperbolic or of twist type. We consider such an $L'$, which we will assume without loss of generality to be fiberwise quadratic outside a compact set (see Section~\ref{s:waists}), and its free-period action functionals $\SSS_e':\MM\to\R$. 

Let $\alpha\in\pi_1(M)$ be a homotopy class as in the statement, so that $\alpha$ is non-trivial and $\alpha^n=\beta\alpha^m\beta^{-1}$ for some distinct non-negative integers $n,m$ and for some $\beta\in\pi_1(M)$. Without loss of generality, we can assume that $n$ and $m$ are both strictly positive (indeed, if $m=0$, then $n>0$ and $\alpha^{n+1}=\alpha$).
For each $k\in\N:=\{1,2,3,...\}$, we denote by $\CC_k\subset\MM$ the connected component of those periodic curves freely homotopic to a representative of $\alpha^k$. Notice that the condition on $\alpha$ means precisely that 
\[\CC_{nk}=\CC_{mk},\qquad \forall k\in\N.\] 
Let $\K\subseteq\N$ be an arbitrary infinite subset. We set 
\[\CC:=\bigcup_{k\in\K} \CC_{nk}=\bigcup_{k\in\K} \CC_{mk}.\]

We choose an energy value $\tilde e\in(\cu(L'),\infty)\cap\Q$. If there exists a periodic orbit of twist type in $\CC$ with energy $\tilde e$, the Hamiltonian Birkhoff-Lewis Theorem (Theorem~\ref{t:Birkhoff_Lewis}) implies that there are infinitely many periodic orbits in $\CC$ on any energy level close to $\tilde e$. Therefore, we are left to consider the case in which all periodic orbits in $\CC$ with energy $\tilde e$ are hyperbolic.

Since $\tilde e>\cu(L')$, the free-period action functional $\SSS_{\tilde e}'$ is bounded from below on every connected component of $\MM$, and satisfies the Palais-Smale condition. Therefore, $\SSS_{\tilde e}'|_{\CC_1}$ admits a global minimizer $\gamma_{\tilde e}\in \CC_1$. Since $\gamma_{\tilde e}$ is a hyperbolic waist, by Lemma~\ref{l:cylinder_hyperbolic_waist} it belongs to a orbit cylinder of hyperbolic waists $\gamma_e\in\CC_1$, for  $e\in(\tilde e-\delta,\tilde e+\delta)\subset(\cu(L),\infty)$. We fix an arbitrary energy value $e\in(\tilde e-\delta,\tilde e+\delta)$, and we set $\gamma:=\gamma_{e}$. In order to complete the proof, we have to show that there are infinitely many periodic orbits with energy $e$ contained in $\CC$.

By~\eqref{e:Bott}, all iterates $\gamma^k$ are hyperbolic waists. Moreover, $\gamma^{nk}$ and $\gamma^{mk}$ are distinct points belonging to the same connected component $\CC_{nk}=\CC_{mk}$. For each $k\in\K$, we consider the minmax value
\begin{align}
\label{e:minmax_sk}
s(k):=\inf_u \max_{s\in[0,1]} \SSS_e'(u(s)),
\end{align}
where the infimum ranges over the continuous maps $u:[0,1]\to\MM$ such that $u(0)=\gamma^{nk}$ and $u(1)=\gamma^{mk}$. Since $e>\cu(L')$, $\SSS_e'$ satisfies the Palais-Smale condition, and therefore $s(k)>\max\{\SSS_e'(\gamma^{nk}),\SSS_e'(\gamma^{mk})\}$ is a critical value of $\SSS_e'$ corresponding to mountain pass critical points of $\SSS_e'$. We set \[\PP_k:=\crit(\SSS_e')\cap\SSS_e'^{-1}(s(k))\cap\CC_{nk},\] 
and claim that the family $\PP:=\cup_{k\in\K}\PP_k$ contains infinitely many geometrically distinct periodic orbits.

In order to prove this claim, we first show that, for each $\zeta\in\PP$, there are at most finitely many $k\in\K$ such that $\zeta\in\PP_k$. Indeed, assume by contradiction that 
\begin{align}\label{e:zeta_kj}
\zeta\in\bigcap_{i\in\N}\PP_{k_i}
\end{align}
for some infinite sequence $k_1<k_2<k_3<...$ in $\K$. If we set $s:=\SSS_e'(\zeta)$, we have 
\[s>\SSS_e'(\gamma^{nk_i})=k_i\SSS_e'(\gamma^n),\qquad\forall i\in\N,\] 
which implies that $\SSS_e'(\gamma^n)\leq 0$. If $\gamma$ has the form $\gamma=(\Gamma,p)\in\MM$, we thus have
\begin{align}\label{e:SSS_e'}
\SSS_{e'}'(\gamma^n)=\SSS_e'(\gamma^n)+(e'-e)p<\SSS_e'(\gamma^n)\leq 0,
\qquad
\forall e'\in(\cu(L),e).
\end{align}
However, \eqref{e:zeta_kj} implies that all the $\gamma^{nk_i}$ belong to the same connected component $\CC':=\CC_{nk_1}=\CC_{nk_2}=\CC_{nk_3}=...$, and therefore the inequality~\eqref{e:SSS_e'} implies 
\begin{align*}
 \inf_{\CC'}\SSS_{e'}'
 \leq
 \lim_{i\to\infty}\SSS_{e'}'(\gamma^{nk_i})
 =
 \lim_{i\to\infty}k_i \SSS_{e'}'(\gamma^{n})
 = -\infty,
\end{align*}
contradicting the fact that $\SSS_{e'}'$ is bounded from below on each connected component of its domain.

Now, assume by contradiction that there are only finitely many geometrically distinct periodic orbits $\zeta_1,...,\zeta_h$ with energy $e$ in $\CC$. The claim proved in the previous paragraph implies that, for all integers $\overline m>0$ there exists $\overline k>0$ such that, if $k\in\K$ is larger than $\overline k$, every critical circle in $\PP_k$ is the critical circle of some iterated periodic orbit $\zeta_i^m$ with $m\geq\overline m$. However, as in the proof of Corollary~\ref{c:supercritical_mult}, this is prevented by \cite[Theorem~2.6]{Abbondandolo:2014rb}.
\end{proof}

\begin{rem}
\label{r:reversibility}
If the Tonelli Lagrangian $L$ is reversible, meaning that $L(q,v)=L(q,-v)$ for all $(q,v)\in\Tan M$, then the assumption that $n,m$ are both non-negative in Theorem~\ref{t:btz} can be dropped. Indeed, if $\gamma\in\CC_1$ is the hyperbolic waist of the proof, the reversibility of $L$ guarantees that the curve $\overline\gamma(t):=\gamma(-t)$ is still a hyperbolic waist with the same energy as $\gamma$, and $\overline\gamma\neq\gamma$. If $n>0$ and $m<0$, the iterates $\gamma^n$ and $\overline\gamma^{-m}$ are distinct points in the same connected component $\CC_n$. Therefore, in order for the above proof to go through, it is enough to modify the min-max~\eqref{e:minmax_sk} and make the infimum range over the continuous maps $u:[0,1]\to\MM$ with $u(0)=\gamma^{nk}$ and $u(1)=\overline\gamma^{-mk}$.
\end{rem}

In order to prove Theorem~\ref{t:just_below}, we first need to recall the following result, which is a combination of Figalli-Rifford's closing lemma \cite[Theorem~1.2]{Figalli:2015ft} with an argument due to Contreras-Iturriaga \cite[page~935]{Contreras:1999wj} that makes a periodic orbit in the Aubry set hyperbolic.

\begin{thm}\label{t:Figalli_Rifford}
Let $L:\Tan M\to\R$ be a Tonelli Lagrangian with $e_0(L)<c(L)$. Then, there exists an arbitrarily $C^1$-small $U\in C^\infty(M)$ such that $c(L)=c(L-U)$ and the Aubry set of $L-U$ consists of exactly one hyperbolic periodic orbit.
\hfill\qed
\end{thm}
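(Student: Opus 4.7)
The plan is to compose two perturbation arguments. First, invoke the Figalli-Rifford closing lemma \cite[Theorem~1.2]{Figalli:2015ft} as a black box: under the hypothesis $e_0(L)<c(L)$, it furnishes an arbitrarily $C^1$-small $U_1\in C^\infty(M)$ with $c(L-U_1)=c(L)$ and $\Aubry(L-U_1)=\gamma$, where $\gamma$ is a single periodic orbit on the energy level $c(L)$, \emph{a priori} merely an isolated closed minimizer rather than a hyperbolic one. The remaining task is to produce a further arbitrarily $C^1$-small potential $V$ such that $\gamma$ is still the whole Aubry set of $L-U_1-V$ at the same critical value $c(L)$, but has become hyperbolic.

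For the hyperbolization, I would adopt the Contreras-Iturriaga trick \cite[page~935]{Contreras:1999wj}: choose Fermi-type coordinates $(s,y)\in\R/p\Z\times\R^{\dim M -1}$ in a tubular neighborhood $\UU$ of the projected orbit $\pi(\gamma)\subset M$, and try $V$ of the form
\[
V(s,y) = \rho(|y|/\epsilon)\,\big\langle B(s)y,y\big\rangle,
\]
with $\rho\geq 0$ a smooth cutoff equal to $1$ near $0$ and supported in $[0,1]$, $B(s)$ a smooth loop of symmetric matrices, and $\epsilon>0$ small; extend $V$ by zero off $\UU$. Because $V$ and $\diff V$ vanish on $\pi(\gamma)$, the curve $\gamma$ remains a periodic Euler-Lagrange trajectory of $L-U_1-V$ with unchanged period and energy, while its Poincar\'e map is perturbed by a term depending linearly on $B$. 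A standard transversality argument lets one choose $B$ so that all Floquet multipliers move off $S^1$, making $\gamma$ hyperbolic. Since $V=O(\epsilon^2)$ and $\diff V=O(\epsilon)$ uniformly for $B$ held fixed, shrinking $\epsilon$ makes $\|V\|_{C^1}$ arbitrarily small.

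To preserve the Ma\~n\'e critical value and the Aubry set, I would invoke the Fathi-Siconolfi-Bernard smooth subsolution theorem to obtain a $C^{1,1}$ critical subsolution $u$ of the Hamilton-Jacobi inequality for $L-U_1$ whose gap $g(q):=c(L)-H_{L-U_1}(q,\diff u(q))\geq 0$ vanishes exactly on $\pi(\gamma)$. The function $u$ is still a critical subsolution for $L-U_1-V$ at energy $c(L)$ as soon as $V\leq g$ pointwise, and this inequality together with $V\geq 0$ delivers both $c(L-U_1-V)=c(L)$ and $\Aubry(L-U_1-V)=\gamma$. The main obstacle is precisely reconciling the pointwise inequality $V\leq g$ with the hyperbolization constraint on $B$: near $\pi(\gamma)$ both sides vanish to second order, so one must verify that the transverse quadratic behavior of $g$, which is controlled by the positive-semidefinite second variation of the free-period action at $\gamma$, leaves enough room to fit the Hessian $B$ needed to move all Floquet multipliers. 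When the second variation fails to be strictly positive definite transverse to $\gamma$, one bootstraps by first performing an auxiliary perturbation that turns $\gamma$ into a strict local minimizer, then running the hyperbolization step; this is essentially the content of the argument on page~935 of \cite{Contreras:1999wj}.
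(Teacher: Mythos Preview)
Your proposal is correct and follows exactly the approach the paper indicates: the paper does not give a detailed proof of this theorem at all, but simply states it as a combination of Figalli-Rifford's closing lemma \cite[Theorem~1.2]{Figalli:2015ft} with the Contreras-Iturriaga hyperbolization trick \cite[page~935]{Contreras:1999wj}, and marks it with a \qed. Your sketch of the Contreras-Iturriaga step (Fermi coordinates, transversely quadratic potential, subsolution comparison to preserve $c(L)$ and the Aubry set) is already more detailed than anything appearing in the paper.
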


\begin{proof}[Proof of Theorem~\ref{t:just_below}]
By means of Theorem~\ref{t:Figalli_Rifford}, we choose an arbitrarily $C^1$-small $U\in C^\infty(M)$ such that $c(L)=c(L-U)$ and the Aubry set of $L':=L-U$ consists of exactly one hyperbolic periodic orbit $\gamma$. We denote by $\SSS_e':\MM\to\R\cup\{\infty\}$ and $\Phi_e':M\times M\to\R\cup\{-\infty\}$ respectively the free-period action functional and the action potential of $L'$ at energy $e$. Since $\gamma$ is the Aubry set $\Aubry(L')$, we have
\begin{align*}
\SSS_{c(L')}'(\gamma)=\Phi_{c(L')}'(\gamma(0),\gamma(0))=0,
\end{align*}
see Section~\ref{s:waists}. Therefore, since $\SSS_{c(L')}$ is a non-negative functional, $\gamma$ is a hyperbolic waist. By Lemma~\ref{l:cylinder_hyperbolic_waist}, $\gamma$ belongs to an orbit cylinder of hyperbolic waists $\gamma_e=(\Gamma_e,p_e)$, $e\in(c(L')-\epsilon,c(L')+\epsilon)$, with $\gamma=\gamma_{c(L')}$. Notice that
\begin{align*}
\tfrac{\diff}{\diff e} \SSS_e'(\gamma_e) = (\partial_e \SSS_e')(\gamma_e) = p_e >0,
\end{align*}
and therefore 
\begin{align*}
\SSS_e'(\gamma_e)<0,\qquad \forall e\in(c(L')-\epsilon,c(L')),\\
\SSS_e'(\gamma_e)>0,\qquad \forall e\in(c(L'),c(L')+\epsilon). 
\end{align*}
Since the iterates of each $\gamma_e$ have indices $\ind(\gamma_e^m)=\nul(\gamma_e^m)=0$, they are all hyperbolic waists, and their actions satisfy 
\begin{align*}
\lim_{m\to\infty} \SSS_e'(\gamma_e^m)
=
\left\{
  \begin{array}{@{}ccc}
    -\infty, &  & \mbox{if }e\in (c(L')-\epsilon,c(L')), \vspace{5pt}\\ 
    +\infty, &  & \mbox{if }e\in (c(L'),c(L')+\epsilon).\\ 
  \end{array}
\right.
\end{align*}
Therefore, we can complete the proof along the line of \cite[Section~3.3]{Abbondandolo:2014rb}. We sketch the argument for the reader convenience.

As we already did in the previous proofs, we can assume without loss of generality that our Lagrangian $L'$ is fiberwise quadratic outside a large compact subset of $\Tan M$ containing in particular the energy sublevel set $E^{-1}(-\infty,c(L')+\epsilon]$. For each $e<\cu(L')=c(L')$, the free-period action functional $\SSS_e'$ is unbounded from below in each connected component. Therefore, for each $e\in(c(L')-\epsilon,c(L'))$ we can define the minmax values
\begin{align*}
s_e(m):= \inf_u \max_{s\in[0,1]} \SSS_e'(u(s)),
\end{align*}
where the infimum ranges over the family of all continuous maps $u:[0,1]\to\MM$ such that $u(0)=\gamma_e^m$ and $\SSS_e'(u(0))<\SSS_e'(\gamma_e^m)$. One can show that $e\mapsto s_e(m)$ is monotone increasing, and therefore, even if $\SSS_e$ may not satisfy the Palais-Smale condition, a trick due to Struwe \cite{Struwe:1990sd} implies that $s_e(m)$ is a critical value of $\SSS_e$ for all energy values $e$ belonging to the full measure subset $I_m\subset(c(L')-\epsilon,c(L'))$ where the function $e\mapsto s_e(m)$ is differentiable. The countable intersection $I:=\cap_{m\in\N} I_m$ is still a full measure subset of $(c(L')-\epsilon,c(L'))$. 

Fix an arbitrary energy value $e\in I$. The well known argument of ``pulling one loop at the time'' due to Bangert implies that the difference $s_e(m)-\SSS_e'(\gamma_e^m)$ is uniformly bounded from above in $m$. Therefore, 
\begin{align}\label{e:going_down}
\lim_{m\to\infty} s_e(m)=-\infty.
\end{align}
If we assume by contradiction that there are only finitely many periodic orbits with energy $e$, Equation~\eqref{e:going_down} implies that for all $\overline n\in\N$ there exists $\overline m\in\N$ large enough so that, for all $m\geq \overline m$, every periodic orbit $\zeta$ with energy $e$ such that $\SSS_e(\zeta)=s_e(m)$ is the $n$-th iterate of some periodic orbit for some $n\geq\overline n$. Arguing as in the proof of Corollary~\ref{c:supercritical_mult}, we see that this is prevented by \cite[Theorem~2.6]{Abbondandolo:2014rb}, which asserts that highly iterated periodic orbits are not mountain passes.

This completes the proof of the theorem for the subcritical energy range $(c(L')-\epsilon,c(L'))$. The analogous argument proves the theorem for the supercritical energy range $(c(L'),c(L')+\epsilon)$; here, since the free-period action functional $\SSS_e'$ satisfies the Palais-Smale condition for all $e\in(c(L'),c(L')+\epsilon)$, there is no need to extract a full measure subset of $(c(L'),c(L')+\epsilon)$.
\end{proof}

\bibliography{_biblio}
\bibliographystyle{amsalpha}

\end{document}